\newtheorem{theorem}{Theorem}[section]
\newtheorem{corollary}[theorem]{Corollary}
\newtheorem{remark}[theorem]{Remark}
\newcommand{\R}{\mathbb{R}}
\newcommand{\Z}{\mathbb{Z}}
\newcommand{\N}{\mathbb{N}}
\newcommand{\C}{\mathbb{C}}
\newcommand{\D}{\mathbb{D}}
\newcommand{\Cn}{\mathbb{C}^n}
\newcommand{\Cnn}{\mathbb{C}^{n \times n}}
\newcommand{\Ct}{\mathbb{C}^2}
\newcommand{\Ctt}{\mathbb{C}^{2 \times 2}}
\newcommand{\Da}{\mathbb{D}_{\I a}}
\newcommand{\Dc}{\mathbb{D}_{\I c}}
\newcommand{\Dma}{\mathbb{D}_{-\I a}}
\newcommand{\Dmc}{\mathbb{D}_{-\I c}}
\newcommand{\md}{\text{mod}}
\newcommand{\nn}{\nonumber}
\newcommand{\beq}{\begin{equation}}
\newcommand{\eeq}{\end{equation}}
\newcommand{\bea}{\begin{eqnarray}}
\newcommand{\eea}{\end{eqnarray}}
\newcommand{\ol}{\overline}
\newcommand{\wti}{\widetilde}
\newcommand{\id}{\mathbb{I}}
\newcommand{\I}{\mathrm{i}}
\newcommand{\E}{\mathrm{e}}
\newcommand{\re}{\mathop{\mathrm{Re}}}
\newcommand{\im}{\mathop{\mathrm{Im}}}
\newcommand{\ga}{\gamma}
\newcommand{\om}{\omega}
\DeclareMathOperator{\Ai}{Ai}
\DeclareMathOperator{\res}{Res}
\DeclareMathOperator{\ai}{ai}
\DeclareMathOperator{\dd}{d}
\newcommand{\sigI}{\begin{pmatrix} 0 & 1 \\ 1 & 0 \end{pmatrix}}
\author[M. Piorkowski]{Mateusz Piorkowski}
\address{Department of Mathematics\\KU Leuven\\
Celestijnenlaan 200B 
\\
3001 Leuven, Belgium}
\email{\href{mailto:mathpiorkowski@gmail.com}{mathpiorkowski@gmail.com}}
\title[Parametrix problem for the Korteweg--de Vries equation]{Parametrix problem for the Korteweg--de Vries equation with steplike initial data}
\date{}
\keywords{Riemann--Hilbert problem, KdV equation, shock wave}
\subjclass[2000]{Primary 37K40, 35Q53; Secondary 37K45, 35Q15}
\thanks{Research supported by the Austrian Science Fund (FWF) under Grants No.\ P31651 and W1245.}
\begin{document}

\begin{abstract}
In this paper we study the asymptotics of solutions to the Korteweg--de Vries equation with steplike initial data, which lead to shock waves in the region between the asymptotically constant region and the soliton region, as $t \rightarrow \infty$. To achieve this, we present an alternative approach to the usual argument involving a small norm Riemann--Hilbert problem, which is based instead on the direct comparison of resolvents related to the corresponding Riemann--Hilbert problems. The motivation for this approach stems from the fact that an invertible holomorphic global parametrix solution for our problem does not exist for certain discrete times. 
\end{abstract}
\maketitle

\section{Introduction}
The Korteweg de--Vries (KdV) equation is one of the most investigated nonlinear wave equations that admits a Lax pair representation and thus can be solved via scattering theory. The explicit asymptotic analysis can be performed by the Deift--Zhou nonlinear steepest descent method for Riemann--Hilbert (R-H) problems (see \cite{DIZ}, \cite{DZ}, \cite{DZNLS}, \cite{DZPainleve}). It involves contour deformations and the introduction of auxiliary functions to obtain a R-H problem with jumps matrices that are either independent of the complex parameter $k$, or exponentially converging to the identity matrix for $t \rightarrow \infty$. Ignoring the exponentially converging part, one obtains a model problem, also referred to as the \emph{global parametrix problem}. In most applications it can be solved explicitly with the help of special functions (in our case Jacobi theta functions), from which the relevant asymptotics can be obtained. The rigorous justification of this method is however nontrivial and leads to a local R-H problem which has to be solved  around the oscillatory points (where the exponential convergence fails). The solution of this so-called \emph{local parametrix problem} again involves the use of special functions (in our case Airy functions) and needs to converge to the model solution locally uniformly away from the oscillatory points. 

The above steps for the KdV equation with steplike (shock wave) initial data in the region between the asymptotically constant and the soliton region, also called \emph{modulated elliptic wave region} after the form of the solutions, have been already performed in \cite{EGKT} and are summarized in the next section (see also \cite{EPT} for more details). The main topic of this paper is the final part of the analysis, which involves the construction of a invertible model matrix solution, or equivalently two linearly independent vector solutions. A peculiar feature of this step is, that while the relevant asymptotics can be read off from the symmetric model vector solution, the justification of the asymptotics requires the construction of a second linearly independent model vector solution (cf.~\cite{AELT}, \cite{PD},  \cite{GT}, \cite{NOEMA}). The reason for this is that inverting the model matrix solution results, after some analysis, in a singular integral equation of the form
\beq \label{SIE}
(\id - \mathcal{C}^\Sigma_u ) \phi = \mathcal{C}^\Sigma_u((1 \hspace{7pt} 1))
\eeq
where $\mathcal{C}^\Sigma_u$ is a singular Cauchy-type operator depending on $u$, where $u$  is a matrix-valued $L^\infty(\Sigma;\Ctt)$-function with $\Vert u \Vert_{L^\infty(\Sigma;\Ctt)} \rightarrow 0$, as $t \rightarrow \infty$ (see \cite[Ch.~7]{PD}). As $\Vert \mathcal{C}^\Sigma_u \Vert_{L^2(\Sigma;\Ct)} = O(\Vert u \Vert_{L^\infty(\Sigma;\Ctt)})$, we can invert $\id - \mathcal{C}^\Sigma_u$ for $t$ large enough by writing down the Neumann series. In particular, we know that equation \eqref{SIE} has a unique solution, which can be then used to write down the corresponding unique solution of the R-H problem. As the invertibility of the singular integral operator $\id - \mathcal{C}^\Sigma_u$ is obtained by the smallness of $\Vert u \Vert_{L^\infty(\Sigma;\Ctt)}$, we will refer to this approach as the \emph{small norm R-H approach}. 

The existence of a normalized second vector-valued solution fails in the case of interest for discrete but arbitrary large times, as we show in \cite{EPT}. One remedy would be the construction of a vector-valued meromorphic solution with a simple pole at the origin, as was done in \cite[Sect.~3]{GGJM} for an analogous model problem. Due to the inherit symmetries of the KdV R-H problem, the pole cancels in the final step of the nonlinear steepest descent analysis and leads to a familiar small norm R-H problem. 

In this work we avoid the small norm R-H problem and instead compare directly a modified version of the original R-H problem with a modified version of the model R-H problem. This is done by showing invertibility of one of the corresponding singular integral operators restricted to a suitable $L^2$-subspace, followed by a comparison of resolvents. This restriction is indeed necessary, as bijectivity does not hold on the full $L^2$-space for certain discrete, but arbitrary large, times $t$. This lack of bijectivity necessitated the construction of meromorphic solutions in \cite[Sect.~3]{GGJM}, and our approach also makes use of a meromorphic vector solution (with a pole at infinity).

The method for showing invertibility on the restricted space is based on the observation made in \cite[Sect.~2]{DZsob}, relating solvability of certain singular integral equations with the solvability of the corresponding inhomogeneous R-H problems. While we concentrate on the KdV case with steplike initial data, the approach taken in this paper should generalize to other problems solvable via the nonlinear steepest descent method.  

The structure of the paper is as follows. Section 2 summarizes the necessary scattering theory to obtain the R-H formulation of the KdV equation, as well as the conjugated and deformed R-H problem suitable for the nonlinear steepest descent analysis. Section 3 contains a local change of variables which results in an explicitly solvable local parametrix problem. The necessary theory of singular Cauchy-type integral operators with emphasis on the symmetries of our problem can be found in Section 4. The subsequent Section 5 contains the main idea of our new method: the construction of two auxiliary R-H problems which are suitable for a direct uniform comparison of the corresponding singular integral operators, leading to uniform error estimates for the KdV solution. It also contains the proof of invertibility of one of the singular integral operators on a suitable symmetric $L^2$-subspace, which is necessary for the resolvent comparison. The discussion section contains some further comments and a short scheme for obtaining the full asymptotic expansion of the KdV solution. The two appendices contain some proof technicalities left out in the main text and a general theorem which describes the method used in this paper. 
\section{preliminaries}

\subsection{Initial data}
We consider the KdV equation (cf.~\cite{EGKT}), given by
\begin{align*}
q_t(x,t) = 6 q(x,t)q_x(x,t) - q_{xxx}(x,t), \hspace{10pt} (x,t) \in \R \times \R_+
\end{align*}
with steplike initial data $q(x) = q(x,0) \in C^{11}(\R)$, i.e.
\begin{align*}
\lim\limits_{x \rightarrow \infty} q(x) &= 0, 
\\
\lim\limits_{x \rightarrow -\infty} q(x) &= -c^2, \hspace{10pt} c>0
\end{align*}
such that
\beq \label{fastDecay}
\int_0^{+\infty} \E^{C_0 x} (|q(x)| + |q(-x)+c^2|) dx < \infty, \hspace{10pt} C_0 > c
\eeq
and
\begin{align*}
\int_{-\infty}^\infty (x^6+1) |q^{(i)}(x)| dx < \infty, \hspace{10pt} i = 1, ..., 11.
\end{align*}
It has been shown (cf.~\cite{EGT2}, \cite{ET}) that the above Cauchy problem has a unique solution $q(\cdot,t) \in C^3(\R)$ satisfying additionally
\beq\label{fastDecay2}
\int_0^\infty |x|(|q(x,t)|+|q(-x,t)+c^2|)\, dx < \infty. 
\eeq
Existence of classical solutions has been proven under more general assumptions in \cite{Ryb}, but we require the more restrictive condition \eqref{fastDecay} for analytic continuation of the reflection coefficients in the framework of the nonlinear steepest descent method, and the estimate \eqref{fastDecay2} for the construction of Jost solutions and well-posedness of the corresponding R-H problem. We will focus on the asymptotic behaviour of solutions in the modulated elliptic wave region given by $-6c^2t < x < 4c^2t$. For simplicity, we will assume that the solution $q(x,t)$ develops no solitons. While the technicalities coming from considering solitons are minor, they would not influence the asymptotics in the region of interest as the corresponding model R-H problem remains unchanged (see \cite[Sect.~5]{EGKT}). 
\subsection{Scattering transform}
The necessary scattering theory for steplike potentials can be found in \cite[Sect.~2]{EGKT} (see also \cite[Sect.~2]{EPT} for more details) and shall be reviewed here briefly. To solve the KdV equation via the scattering transform, we need to regard the solution $q(x,t)$ as a potential of a self-adjoint Schr\"odinger operator:
\begin{align*}
L(t) = -\dfrac{d^2}{dx^2} + q(\, \cdot \,,t), \hspace{10pt} \mathfrak{D}(L) = H^2(\R) \subset L^2(\R).
\end{align*}
Because of the behaviour of $q(x,t)$ for $x \to \pm \infty$, one can find unique Jost solutions $\phi(k,x,t)$, $\phi_1(k,x,t)$ of the stationary Schr\"odinger equation
\begin{align*}
L(t) \psi(k,x,t) = k^2 \psi(k,x,t), \hspace{10pt} \im(k) >0,
\end{align*}
determined by
\begin{align*}
\lim\limits_{x \rightarrow \infty} \E^{-\I k x} \phi(k,x,t) = 1, \hspace{10pt} \lim\limits_{x \rightarrow -\infty} \E^{\I k_1 x} \phi_1(k,x,t) = 1,
\end{align*}
where $k_1 := \sqrt{k^2 + c^2}$ is holomorphic in $\C \setminus [-\I c, \I c]$ with $k_1 > 0$ for $k > 0$. We endow $[-\I c, \I c]$ with an orientation from top to bottom, hence $+$ $(-)$ denotes the limit from the right (left), e.g. $k_{1,+} = - k_{1.-}$. The Jost solutions $\phi(k,x,t)$ and $\phi_1(k,x,t)$ are holomorphic in the variable $k$ in the domain $\C^U := \lbrace k: \im(k) > 0\rbrace $, and $\C^U_c :=\C^U \setminus (0, \I c]$ respectively and continuous up to the boundary. Hence, we can evaluate $\phi(k,x,t)$ and $\phi_1(k,x,t)$ on the real axis, which results in the scattering relations
\begin{align*}
T(k,t)\phi_1(k,x,t) &= \overline{\phi(k,x,t)} + R(k,t) \phi(k,x,t), \hspace{36pt} k \in \R,
\\
T_1(k,t) \phi(k,x,t) &= \overline{\phi_1(k,x,t)}+R_1(k,t) \phi_1(k,x,t), \qquad k_1 \in \R,
\end{align*}
where $T(k,t)$, $T_1(k,t)$ and $R(k,t)$, $R_1(k,t)$ are the transmission and reflection coefficients determined uniquely by the above equations. In the case of the absence of the discrete spectrum, $T(k,t)$ and $T_1(k,t)$ are holomorphic in $\C^U_c$ and continuous up to the boundary, while $R(k,t)$ and $R_1(k,t)$ have an analytic continuation to the domain $\lbrace k : 0 <\im (k) < C_0 \rbrace \setminus (0,\I c] $, because of assumption \eqref{fastDecay}.
Moreover, we will assume the generic nonresonant case (for an analysis including the resonant case, see \cite[Sect.~2]{EPT}), meaning that the Wronskian $W(k,t) := W(\phi, \phi_1)(k,t)$ does not vanish at $k = \I c$. As the transmission coefficients can be related to the Wronskian via
\beq\nonumber
T(k,t) = \frac{2\I k}{W(k,t)}, \qquad T_1(k,t) = \frac{2\I k_1}{W(k,t)}, 
\eeq
we see that $T(k,t)^{\pm 1}$ is bounded in the vicinity of $\I c$, while $T_1(k,t) = O((k-\I c)^{1/2})$ as $k \rightarrow \I c$. We also introduce an auxiliary function
\begin{align} \label{chi}
\chi(k,t) := -\lim\limits_{\varepsilon \rightarrow 0+} \overline{T(k+\varepsilon,t)} T_1(k+\varepsilon,t), \hspace{10pt} k \in (0,\I c]
\end{align}
and extend it to $[-\I c, 0)$ via
\begin{align*}
\chi(-k,t) = - \chi(k,t).
\end{align*}
More properties of the above functions can be found in \cite[Sect.~2]{EGKT} (see also \cite[Sect.~2]{EPT}), and shall be mentioned when needed. 

The next step involves defining a minimal scattering data, from which the potential $q(\cdot,t)$ can be reconstructed. The choice of the scattering data is determined by the form of the Gelfand--Levitan--Marchenko equation (cf.~\cite{CK85}) and has the form
\begin{align*}
S(t) = \lbrace R(k,t), \ k \in \R; \ \chi(k,t), \ k\in (0, \I c]\rbrace.
\end{align*}
Here, $S(t)$ denotes the scattering data of the solution $q(\cdot,t)$ of the KdV equation, which evolves linearly from the scattering data $S(0)$ of the initial data $q(\cdot,t)$ via 
\begin{align}\label{timeevol}
\begin{split}
R(k,t) &= R(k,0) \E^{8\I k^3 t} = R(k) \E^{8\I k^3 t},
\\
\chi(k,t) &= \chi(k,0) \E^{8 \I k^3 t} = \chi(k) \E^{8 \I k^3 t}.
\end{split}
\end{align}
We see that the direct scattering transform, mapping $q(\cdot ,t) \to S(t)$, effectively linearizes the KdV equation. The R-H approach is then used to perform the inverse scattering transform, mapping $S(t) \rightarrow q(\cdot,t)$, and is outlined in the following theorem taken from \cite[Thm.~2.3]{EGKT} (see also \cite[Thm.~2.1]{EPT} for the proof of uniqueness):
\begin{theorem} \label{RHPfor_m}
Let $m(k) = m(k,x,t)$ be given by
\beq\nonumber
m(k,x,t)=
\begin{cases}
\Big(T(k,t) \phi_1(k,x,t) \E^{\I kx},  \phi(k,x,t) \E^{-\I kx}\Big),  & k\in \C^U_c
\\
\Big(\phi(-k,x,t) \E^{\I kx}, T(-k,t) \phi_1(-k,x,t) \E^{-\I kx}\Big),  & k\in\C^L_c,
\end{cases}
\eeq
where $\C^U_c=\{k:\ \im k>0\}\setminus(0,\I c]$,
$\C^L_c=\{k:\ \im k<0\}\setminus(0,-\I c]$. Then $m(k)$ is the unique solution to the following R-H problem:
\\
\\
Find a vector-valued function $m(k) = (m_1(k), \, m_2(k))$, which is holomorphic in $\C \setminus (\R \cup [-\I c,\I c]) $, and has continuous limits to $\R \cup [-\I c,\I c]$, satisfying:
\begin{enumerate}[(i)]
\item The jump condition $m_+(k)=m_-(k) v(k)$,
\beq \nonumber
v(k)=\left\{\begin{array}{cc}\begin{pmatrix}
1-|R(k)|^2 & - \ol{R(k)} \E^{-t\Phi(k)} \\
R(k) \E^{t\Phi(k)} & 1
\end{pmatrix},& k\in\R,\\
 \ &\ \\
\begin{pmatrix}
1 & 0 \\
\chi(k) \E^{t\Phi(k)} & 1
\end{pmatrix},& k\in (0, \I c],\\
 \ &\ \\
\begin{pmatrix}
1 & \chi(k) \E^{-t\Phi(k)} \\
0 & 1
\end{pmatrix},& k\in [-\I c, 0),\\
\end{array}\right.
\eeq
where $\R$ is oriented from left to right and $[-\I c, \I c]$ is oriented from top to bottom,
%%%%%%%% POLE CONDITION %%%%%%
\begin{comment}
\item
the pole conditions
\beq\label{eq:polecond}
\aligned
\res_{\I\kappa_j} m(k) &= \lim_{k\to\I\kappa_j} m(k)
\begin{pmatrix} 0 & 0\\ \I \ga_j^2 \E^{t\Phi(\I \kappa_j)}  & 0 \end{pmatrix},\\
\res_{-\I\kappa_j} m(k) &= \lim_{k\to -\I\kappa_j} m(k)
\begin{pmatrix} 0 & - \I \ga_j^2 \E^{t\Phi(\I \kappa_j)} \\ 0 & 0 \end{pmatrix},
\endaligned
\eeq
\end{comment}
\item
the symmetry condition
\beq \label{eq:symcond}
m(-k) = m(k) \sigI,
\eeq
\item
and the normalization condition
\beq\label{eq:normcond}
\lim_{k\to\infty} m(k) = (1 \hspace{7pt} 1).
\eeq
\end{enumerate}
Here the phase $\Phi(k)=\Phi(k,x,t)$ is given by
\begin{equation*}
\Phi(k)= 8 \I k^3+2\I k \frac {x}{t},
\end{equation*}
\end{theorem}
Note that the jump matrix $v(k)$ also satisfies a symmetry condition
\beq \label{vSym}
v(-k) = \sigma_1 v(k)^{-1} \sigma_1, \quad 
\sigma_1 := 
\begin{pmatrix}
0 & 1
\\
1 & 0
\end{pmatrix}.
\eeq
There are two methods to obtain $q(x,t)$ from $m(k,x,t) = (m_1(k,x,t), \, m_2(k,x,t))$ (see \cite{EPT}):
\begin{align*}
q(x,t) =  \partial_x \lim\limits_{k \rightarrow \infty} 2 \I k(m_1(k,x,t)-1) = -\partial_x \lim\limits_{k \rightarrow \infty} 2 \I k ( m_2(k,x,t)-1),
\end{align*}
\beq \label{mproduct}
q(x,t) = \lim\limits_{k \rightarrow \infty} 2 k^2(m_1(k,x,t) m_2(k,x,t)-1).
\eeq
The first formulas are more analytically demanding because of the differentiation, so we will use the second formula \eqref{mproduct}.

\subsection{Conjugation steps}

For further analysis we introduce the following function
\beq\nonumber
{g(k)=g(k,x,t):=12\int_{\I c}^k (k^2 + \mu^2)\sqrt\frac{k^2 + a^2}{k^2 + c^2}} dk
\eeq
which is holomorphic in $\C \setminus [-\I c, \I c]$ and approximates $\Phi(k)$ at infinity, while simplifying our R-H problem on $[-\I c, \I c]$. As has been shown in \cite[Sect.~4]{KM10}, $a = a(\xi)$ and $\mu = \mu(\xi)$ can be chosen to depend continuously on the slowly varying parameter $\xi = \frac{x}{12t} \in (-\frac{c^2}{2}, \frac{c^2}{3})$ such that the following  properties hold (see \cite[Sect.~4]{EGKT}):
\begin{enumerate}[(i).]
\item $\mbox{The function } g \mbox{ is odd, i.e. } g(-k) = -g(k), \hspace{3pt} k \in \C \setminus [-\I c, \I c]$;
\item $g_-(k) + g_+(k) = 0$ for  $k \in [-\I c, \I c] \setminus (-\I a, \I a)$;
\item $g_-(k) - g_+(k) = B$ for $k \in [-\I a, \I a]$, with $B := -2g_+(\I a) > 0$;
\item for $k \rightarrow \infty$ we have
\begin{align*}
\dfrac{1}{2}\Phi(k, \xi) - \I g(k, \xi) =  O(k^{-1}).
\end{align*}
\end{enumerate}

In the following we summarize the necessary factorization and conjugation steps (see \cite[Sect.~4]{EGKT} for details). We start by modify the R-H problem from Theorem \ref{RHPfor_m} by conjugating by the matrix $\E^{-(t \Phi(k)/2-\I t g(k)) \sigma_3}$, i.e.
\begin{align*}
m(k) \longrightarrow \hat m(k) &:= m(k) \E^{-(t \Phi(k)/2-\I t g(k)) \sigma_3},
\\
v(k) \longrightarrow \hat v(k) &:= \E^{(t \Phi(k)/2-\I t g_-(k)) \sigma_3} v(k) \E^{-(t \Phi(k)/2-\I t g_+(k)) \sigma_3},
\\
\sigma_3 &:=
\begin{pmatrix}
1 & 0
\\
0 & -1
\end{pmatrix} .
\end{align*}
The next step involves performing a standard factorization of the jump matrix on the real axis and further local conjugations. The resulting R-H problem has the jump matrix
\beq\label{v22}
v^{(2)}(k) = \left\{ \begin{array}{ll}
\begin{pmatrix}
0 & \I \\
\I  & 0
\end{pmatrix},& k\in [\I c, \I a],
\\
\\
\begin{pmatrix}\E^{-\I t \hat B}& 0\\
\frac{\I|\chi|}{F_-^2}\E^{\I t(2g_-- \hat{B})} &\E^{\I t\hat{B}}\end{pmatrix}, &k\in[\I a , \I b]
\\
\\
\begin{pmatrix}
1&0\\
\tfrac{R}{F^2}\E^{2\I t g} & 1
\end{pmatrix}, & k\in\Sigma^U
\\
\\
\begin{pmatrix}\E^{-\I t \hat B}& 0\\
0&\E^{\I t \hat B}\end{pmatrix},
& k\in [-\I a, \I a],\\
\\
\\
\begin{pmatrix}
1 & \tfrac{F^2}{V}\E^{-2\I t g}
\\
0 & 1
\end{pmatrix}, & k\in\Sigma^U_1,
\\
\\
\sigma_1 \big[v^{(2)}(-k)]^{-1}\sigma_1, & k \in [-\I a, -\I c] \cup [-\I b, -\I a]\cup \Sigma^L\cup \Sigma^L_1,
\end{array}\right.
\eeq
with the corresponding jump contour given in Figure \ref{fig3}. Note that we use the superscript ${}^{(2)}$ to make the comparison with \cite{EGKT} easier, though our $v^{(2)}(k)$ differs slightly from the one found therein due to a different choice of the jump contour. We require the solution $m^{(2)}(k,x,t) = m^{(2)}(k)$ to satisfy the usual symmetry and normalization conditions \eqref{eq:symcond}, \eqref{eq:normcond}, and to have bounded limits to the contour, except for $k \to \pm \I c$, where we allow (see \cite[Eq.~(4.30)]{EGKT})
\beq\nn
m^{(2)}(k) = O((k\mp \I c)^{-1/4}), \qquad k \to \pm \I c.
\eeq
\begin{figure}[h]
\begin{picture}(8,4.5)
\put(4,0){\line(0,1){4}}
\put(4,3.6){\vector(0,-1){0.1}}
\put(4,1.7){\vector(0,-1){0.1}}
\put(4,0.5){\vector(0,-1){0.1}}

\put(3.6,3.05){$\I a$}
\put(3.35, 0.8){$-\I a$}
\put(4.1, 4){$\I c$}
\put(4, -0.2){$-\I c$}
\put(4.15,2.75){$\I b$}
\put(4.1,1.05){$-\I b$}
\put(4,0.8){\circle*{0.1}}
\put(4,3.2){\circle*{0.1}}

\put(4,0){\circle*{0.1}}
\put(4,4){\circle*{0.1}}
\put(4, 2.8){\circle*{0.1}}
\put(4, 1.2){\circle*{0.1}}

\put(2,2.5){$\Sigma^U$}
\put(2,1.2){$\Sigma^L$}
\put(5.5,2.5){$\Sigma^U$}
\put(5.5,1.2){$\Sigma^L$}

\put(5.1,4){$\Sigma^U_1$}
\put(5.1,-0.2){$\Sigma^L_1$}

\put(3.4,2.25){$\Omega^U$}
\put(3.4,1.5){$\Omega^L$}

\put(3.4,3.6){$\Omega_1^U$}
\put(3.4,0.2){$\Omega_1^L$}

\curve(4,2.8, 5,2.5, 7.5,2.3)
\curve(4,1.2, 5,1.5, 7.5,1.7)
\curve(4,2.8, 3,2.5, 0.5,2.3)
\curve(4,1.2, 3,1.5, 0.5,1.7)

\put(6.5,2.34){\vector(1,0){0.1}}
\put(6.5,1.66){\vector(1,0){0.1}}
\put(1.5,2.34){\vector(1,0){0.1}}
\put(1.5,1.66){\vector(1,0){0.1}}

\curve(4,3.2, 3,3.9, 4,4.5, 5,3.9, 4,3.2)
\curve(4,0.8, 3,0.1, 4,-0.5, 5,0.1, 4,0.8)

\put(4.8,3.62){\vector(1,1){0.1}}
\put(3.1,3.7){\vector(1, -1){0.1}}
\put(4.85,0.35){\vector(1,-1){0.1}}
\put(3.1,0.3){\vector(1,1){0.1}}

\curvedashes{0.05,0.05}
\curve(0.3,2, 7.7,2)

\end{picture}
\vspace{10pt}
  \caption{The contour $\Sigma$ of the model R-H problem with exponential correction}\label{fig3}
\end{figure}
In \eqref{v22}, we have that $\hat{B} := B + \frac{\Delta}{t}$, 
\beq\nonumber
\Delta = \Delta(\xi) = 2\I \frac{\int_{\I a}^{\I c} \frac{\log |\chi(s)|\, ds}{[\sqrt{(s^2+c^2)(s^2+a^2)}]_+}}{\int_{-\I a}^{\I a}\frac{ds}{\sqrt{(s^2+c^2)(s^2+a^2)}}} \in \R
\eeq
and $F(k)$ is an auxiliary function defined in \cite[Eq.~(4.21)]{EGKT} having the properties:
\begin{enumerate}[(i).]
\item $F_+(k)F_-(k)=|\chi(k)|$ for $k\in [\I c,\I a]$,
\item $F_+(k)F_-(k)=|\chi(k)|^{-1}$ for $k\in[-\I a,-\I c]$,
\item $F_+(k)=F_-(k)\E^{\I \Delta} $ for $k\in[-\I a, \I a]$,
\item $F(k)\to 1$ as $k\to \infty$ and $F(-k)=F^{-1}(k)$ for $k\in\C\setminus [-\I c, \I c]$,
\item $F(k)$ is holomorphic in $\C \setminus [-\I c, \I c]$ with $F(k) = O((k\mp\I c)^{\pm 1/4})$ for $k \to \pm \I c$ (see \cite[p.~17]{EPT}).
\end{enumerate}
Moreover, $V(k) := \overline{T(k)}T_1(k)$ for $k \in \C^U_c$ and $V(-k) = V(k)$ for $k \in \C^L_c$, note $V_+(k) = -\chi(k)$ for $k \in (0,\I c]$.

We write $\Sigma$ for the union of all contours listed in \eqref{v22}, and we assume that $\Sigma$ is invariant under the map $k \to -k$, which reverses its orientation as depicted in Figure \ref{fig3}. Note that the jump matrix $v^{(2)}(k)$ satisfies the symmetry condition \eqref{vSym}.  
As all conjugation and deformation steps are invertible, we know from Theorem \ref{RHPfor_m}, that there exists a unique solution $m^{(2)}(k)$ to the R-H problem with jump matrix $v^{(2)}(k)$, together with the usual asymptotics at infinity \eqref{eq:normcond} and the symmetry condition \eqref{eq:symcond}. Moreover, we assume that $\Sigma^U$ and $\Sigma^L$ remain a finite distance away from the real line, implying that for $t \rightarrow \infty$ the jump matrices on $\Sigma^U$, $\Sigma^L$, $\Sigma_1^U \setminus \lbrace \I a \rbrace$ and $\Sigma_1^L \setminus \lbrace -\I a \rbrace$ converge pointwise exponentially fast to the identity matrix, and on $(\I a, \I b] \cup (-\I a, -\I b]$ to the diagonal matrix $\E^{-\I t \hat{B} \sigma_3}$. Note however, that the exponential convergence rate is not uniform for $k \to \pm \I a$. We shall refer to the R-H problem for $m^{(2)}(k)$ as the \emph{model R-H problem with exponential correction}, and the one where we ignore the  jump matrices converging to the identity matrix as the \emph{asymptotic model R-H problem} or just \emph{model R-H problem}. The latter one is solved explicitly in \cite{EGKT} (see also \cite{KM10}) and its solution is unique (see \cite[Sect.~3]{EPT}):
\begin{theorem} \label{modUniq}
The model R-H problem, given through the jump matrix 
\begin{align}\label{modJump}
v^{\emph{mod}}(k) = 
\left\{ \begin{array}{ll}
\begin{pmatrix}
0 & \I \\
\I  & 0
\end{pmatrix},& k\in [\I a, \I c],\\
\\
\begin{pmatrix}
0 & -\I \\
-\I & 0
\end{pmatrix},& k\in [-\I a, - \I c],\\
\\
\begin{pmatrix}\E^{-\I t \hat B}& 0\\
0&\E^{\I t \hat B}\end{pmatrix},
& k\in [-\I a, \I a],\\
\end{array}\right.
\end{align}
with the jump contour $[-\I c, \I c]$ oriented from top to bottom, has a unique solution $m^{\emph{mod}}(k)$ with continuous limits to $(-\I c, \I c) \setminus \lbrace -\I a, \I a \rbrace$, satisfying the symmetry and normalization conditions \eqref{eq:symcond}, \eqref{eq:normcond}, and having at most fourth root singularities around the points $\kappa \in \lbrace -\I a, \I a, -\I c, \I c \rbrace$:
\beq\nonumber
m^{\emph{mod}}(k) = O((k-\kappa)^{-1/4}), \quad k \to \kappa.
\eeq
\end{theorem}

\section{Parametrix problem}
\subsection{Local change of variables}

We now turn to the jump condition near the points $\pm \I a$ (we will just consider $+\I a$, analogous results hold for $-\I a$). For $k$ near $\I a$ we can write

\begin{align*}
g(k) &= 12 \int_{\I c}^k (s^2 + \mu^2) \sqrt{\dfrac{s^2 + a^2}{s^2 + c^2}} ds\\
&= g_\pm(\I a) + 12 \int_{\I a}^{k} (s^2 + \mu^2) \sqrt{\dfrac{s^2 + a^2}{s^2 + c^2}} ds\\\nn
& = \mp \dfrac{B(\xi)}{2} - 8 \E^{\I\pi/4} (a^2 - \mu^2) \sqrt{\dfrac{2a}{c^2-a^2}} (k-\I a)^{3/2} + O((k- \I a)^{5/2}),
\end{align*}
\begin{comment}
&= \mp \dfrac{B(\xi)}{2} - 12 \int_{\I a}^{k} \Bigg[ (a^2 - \mu^2) \sqrt{\dfrac{2 \I a}{c^2-a^2}} + O(s-\I a) \Bigg] \sqrt{s-\I a}\, ds\\
\end{comment}
where the roots in the last two lines have a branch cut on the positive imaginary axis (i.e.~$\im k > a$, $\re k = 0$). We choose all roots such that they are positive for positive arguments $k-\I a > 0$. The upper (lower) sign is for the limit
from the right (left), respectively.

Next, we define a local time-dependent holomorphic change of variables $k-\I a \rightarrow w$, such that
\beq \nonumber
g(k) = \mp \dfrac{B(\xi)}{2} + \dfrac{\varsigma w(k)^{3/2}}{t}, \qquad k \in \Da
\eeq
where $\Da$ is a disc around $\I a$ with a fixed radius smaller than $\min(c-a, a-b)$, such that $k-\I a \to w$ is bijective and $\varsigma = \E^{-3\pi \I/4}$. The branch cut is defined again on the positive imaginary axis.  Furthermore, if convenient we will abuse notation by writing $f(w)$ for $f(k(w))$, if $f$ is a function of the variable $k$ and vice versa. 

We continue with the model R-H problem with exponential correction around the critical point $\I a$. To make it independent of our new variable $w$, we perform a conjugation by the matrix $\E^{-\I t g(w) \sigma_3}$. This results in local  jump conditions with the following jump matrices around $k = \I a$ ($w = 0$), where $\hat h := \E^{\I \Delta}$:
\begin{figure}[H] 
%\scalebox{1.5}{
\begin{picture}(7,5.7)
\put(3.5,0){\line(0,1){5}}
\put(3.5,1){\vector(0,-1){0.1}}
\put(3.5, 4){\vector(0,-1){0.1}}
\put(1,5){\line(1,-1){2.5}}
\put(2,4){\vector(1,-1){0.1}}
\put(3.5,2.5){\line(1,1){2.5}}
\put(5,4){\vector(1,1){0.1}}

\put(5,3.5){$\scriptsize\begin{pmatrix} 1 & F^2/V \\ 0 & 1 \end{pmatrix}$}
\put(0.1,3.5){$\scriptsize\begin{pmatrix} 1 & F^2/V \\ 0 & 1 \end{pmatrix}$}

\put(3.6,4.5){$\scriptsize\begin{pmatrix} 0 & \I 
\\
\I   & 0 \end{pmatrix}$}
\put(3.6,0.7){$\scriptsize\begin{pmatrix} \hat{h}^{-1} & 0 
\\ 
\hat{h} \chi/F_+^2 & \hat{h} \end{pmatrix}$}
\end{picture}
\caption{The local jump conditions}
\end{figure}

In the next step we conjugate around the origin in the $w$-domain by the matrix
\begin{align*}
\begin{pmatrix}
p(w) & 0
\\
0 & p(w)^{-1}
\end{pmatrix}
\end{align*}
where $p(w)$ should be a holomorphic function with a possible jump on the imaginary axis. The jump matrices transform as follows
\vspace{10pt}
\begin{align*}
\begin{pmatrix} 1 & F^2/V \\ 0 & 1 \end{pmatrix}
&\longrightarrow
\begin{pmatrix} 1 &  F^2/V p^{-2} \\ 0 & 1 \end{pmatrix}
\\
\begin{pmatrix} 0 & \I 
\\
\I   & 0 \end{pmatrix}
&\longrightarrow
\begin{pmatrix} 0 & \I p_+^{-1} p_-^{-1}
\\
\I p_+ p_-  & 0 \end{pmatrix}
\\
\begin{pmatrix} \hat{h}^{-1} & 0 
\\ 
\hat{h} \chi/F_+^2 & \hat{h} \end{pmatrix}
&\longrightarrow
\begin{pmatrix} \hat{h}^{-1} p_+ p_-^{-1} & 0 
\\ 
\hat{h} \chi/F_+^2  p_+ p_- & \hat{h} p_+^{-1}p_- \end{pmatrix}.
\end{align*}
To simplify the problem, we require that 
\beq \nn
\dfrac{F^2}{V}p^{-2} = 1, \qquad \mbox{in } \ \Da \setminus (0, \I c]
\eeq
which implies 
\beq \nn
p = \dfrac{F}{\sqrt{V}}, \qquad \mbox{in } \ \Da \setminus (0,\I c].
\eeq
As the limit of $F^2/V$ from the right (left) to $\I a$ is equal to $\I \hat{h}$ ($-\I \hat{h}^{-1}$) which are of modulus $1$, we see that we can find locally a square root. We choose the normalization
\beq \nn
p_+(0) = \E^{\pi \I /4} \hat{h}^{1/2} 
\eeq
and
\beq \nn
p_-(0) = \E^{-\pi \I /4} \hat{h}^{-1/2}.
\eeq
The following boundary values can be computed explicitly:
\beq \nn
p_+ p_- = \dfrac{F_+ F_-}{\sqrt{V_+}\sqrt{V_-}} =  \dfrac{|\chi |}{\sqrt{-\chi}\sqrt{\chi}} = \dfrac{\I |\chi|}{\chi}  =   1, \hspace{10pt} k(w) \in [\I a, \I c] \cap \Da,
\eeq
\beq \nn
p_+ p_-^{-1} = \dfrac{F_+ F_-^{-1}}{\sqrt{V_+}\sqrt{V_-}^{-1}} = \dfrac{\hat{h}}{\sqrt{-\chi}\sqrt{\chi}^{-1}} = \I \hat{h}, \hspace{10pt} k(w) \in [0, \I a] \cap \Da.
\eeq
The conjugated jump conditions take on the form:
\begin{figure}[H] 
%\scalebox{1.5}{
\begin{picture}(7,5.7)

\put(3.5,0.5){\line(0,1){4.5}}
\put(3.5,1.5){\vector(0,-1){0.1}}
\put(3.5, 4){\vector(0,-1){0.1}}
\put(1.2,4.8){\line(1,-1){2.3}}
\put(2,4){\vector(1,-1){0.1}}
\put(3.5,2.5){\line(1,1){2.3}}
\put(5,4){\vector(1,1){0.1}}

\put(2.2, 2.2){$\Omega_4$}
\put(4.6, 2.2){$\Omega_1$}
\put(4, 3.7){$\Omega_2$}
\put(2.7, 3.7){$\Omega_3$}

\put(5,3.5){$\scriptsize\begin{pmatrix} 1 & 1 \\ 0 & 1 \end{pmatrix}$}
\put(0.9,3.5){$\scriptsize\begin{pmatrix} 1 & 1 \\ 0 & 1 \end{pmatrix}$}

\put(3.6,4.5){$\scriptsize\begin{pmatrix} 0 & \I 
\\
\I    & 0 \end{pmatrix}$}
\put(3.6,1.3){$\scriptsize\begin{pmatrix} \I & 0 
\\ 
\I  & -\I \end{pmatrix}$}

\put(5.8,5){$\Sigma_1$}
\put(0.8,5){$\Sigma_3$}

\put(3.3, 5.3){$\Sigma_2$}
\put(3.3, 0.1){$\Sigma_4$}
\end{picture}

\caption{The conjugated jump conditions}
\end{figure}
where we have used that
\beq\nonumber
\dfrac{\hat{h} \chi p_+ p_-}{F_+^2} = \I 
\eeq
on the imaginary segment $k(w) \in [0, \I a] \cap \Da$. The problem has, up to conjugation, the same jump structure as the \emph{Airy R-H problem} (see e.g.~, \cite[Sect.~6]{AELT}, \cite{PD}, \cite[Sect.~5.3]{GGJM}), except that in our setting we do not require any normalization at infinity. 

An explicit solution can be given in terms of Airy functions and their derivatives (see \cite[Ch.~9]{OLVER}). As this solution is well-known, we will skip its construction,  referring instead to \cite[Sect.~3.2]{HarnardBook}. What will be important to us is the form of the local Airy parametrix solution $A(k)$ in the $k$-domain around the point $\I a$. For convenience we also conjugate back with $p(w)^{-\sigma_3}$ and $\E^{\I t g(w)\sigma_3}$, where $w = w(k)$, resulting in:
\beq \label{airyMatrix}
A(k) = 
\begin{cases}
\E^{\I \pi/4} t^{-\sigma_3/6}
\begin{pmatrix}
\frac{2\I}{3\varsigma} & 0
\\
0 & 1
\end{pmatrix}
\begin{pmatrix}
y_3'(w) & -y_1'(w)
\\
 -y_3(w) & y_1(w)
\end{pmatrix}p(w)^{-\sigma_3}\E^{\I t g(w) \sigma_3},  \hspace{14pt} w \in \Omega_1,
\\
\\
\E^{ \I \pi/4} t^{-\sigma_3/6}
\begin{pmatrix}
\frac{2\I}{3\varsigma} & 0
\\
0 & 1
\end{pmatrix}
\begin{pmatrix}
y_3'(w) & y_2'(w)
\\
-y_3(w) & -y_2(w) 
\end{pmatrix}p(w)^{-\sigma_3}\E^{\I t g(w) \sigma_3}, \hspace{14pt} w \in \Omega_2,
\\
\\
\E^{\I \pi/4} t^{-\sigma_3/6}
\begin{pmatrix}
\frac{2\I}{3\varsigma} & 0
\\
0 & 1
\end{pmatrix}
\begin{pmatrix}
-\I y_2'(w) & -\I y_3'(w)
\\
\I y_2(w) & \I y_3(w) 
\end{pmatrix}p(w)^{-\sigma_3}\E^{\I t g(w) \sigma_3}, \hspace{10pt} w \in \Omega_3,
\\
\\
\E^{\I \pi/4} t^{-\sigma_3/6}
\begin{pmatrix}
\frac{2\I}{3\varsigma} & 0
\\
0 & 1
\end{pmatrix}
\begin{pmatrix}
-\I y_2'(w) & \I y_1'(w)
\\
\I y_2(w) & -\I y_1(w) 
\end{pmatrix}p(w)^{-\sigma_3}\E^{\I t g(w) \sigma_3}, \hspace{10pt} w \in \Omega_4,
\end{cases}
\eeq
in the disc $\Da$ specified earlier. Here the functions $y_i(w)$, $i = 1, \dots 3$ are defined to be
\begin{align*} \nonumber
y_1(w) &:=  2 \sqrt{\pi} \E^{\I \pi /8} (3/2)^{1/6}\Ai(\I w (3/2)^{2/3}),
\\
y_2(w) &:= \rho y_1(\rho w),
\\
\nonumber
y_3(w) &:= \rho^2 y_1(\rho^2 w),
\end{align*}
where $\rho = \E^{2\pi \I/3}$.
Note that $m^{(2)}(k)A^{-1}(k)$ will have no jumps inside $\Da$. 

For our subsequent analysis we need an analogous local solution for the model problem, i.e.~we look for a matrix $N(k)$ defined on $\Da$, such that $m^\md(k)N^{-1}(k)$ has no jumps inside $\Da$. This translate to the following jump condition for $N(k)$ 
\begin{equation} \label{jumpN}
\begin{aligned}
N_+(k) &=  N_-(k)\begin{pmatrix}
0 & \I
\\
\I & 0
\end{pmatrix}, \hspace{40pt} k \in [\I a, \I c] \cap \Da,
\\
N_+(k) &=  N_-(k)\begin{pmatrix}
\E^{-\I t \hat{B}} & 0
\\
0 & \E^{\I t \hat{B}}
\end{pmatrix}, \hspace{10pt} k \in [0, \I a] \cap \Da.
\end{aligned}
\end{equation}
Furthermore, on the boundary $\partial\Da$ we require $N(k)$ to asymptotically converge to $A(k)$. 
\begin{remark}
To the best of the authors knowledge, such a R-H problem for a local solution $N(k)$ of the global model problem is new. In many cases, a global model matrix-valued solution is available (see e.g. \cite[Sect.~5]{AELT}, \cite[Sect.~4.4]{DKMVZ}, \cite[Sect.~4]{KM10}, \cite[Sect.~5]{KMVV}), making the construction of a local version $N(k)$ obsolete.
\end{remark}
The correct solution of \eqref{jumpN} is given by
\beq \label{N}
N(k) =  \frac{t^{-\sigma_3/6}}{\sqrt{2}}\begin{pmatrix}
 w^{1/4}  & w^{1/4}
\\
-w^{-1/4}  & w^{-1/4} 
\end{pmatrix}\E^{\mp\I(t B/2 + \pi/4)\sigma_3} p(w)^{-\sigma_3}, \quad k \in \Da \setminus (0, \I c]
\eeq
which is obtained from $A(k)$ by taking the first term in the expansion of the Airy function and their derivative (cf.~\cite[Eq.~9.7.5--6]{OLVER}). 
Note that in $A (k)$ the $\exp(\I \varsigma w(k)^{3/2})$ factor cancels partially with $\exp(\pm \I t g(k)) = \exp(\mp \I B/2 + \I \varsigma w(k)^{3/2})$ leaving only $\exp(\pm \I t B/2)$, which is contained in formula \eqref{N}. In fact, choosing $N(k)$ as above and using the normalization in \eqref{airyMatrix} we obtain the estimate
\beq\label{AandN}
A(k) = N(k) +
O(t^{-1}), \quad k \in \partial \Da,
\eeq
which follows from the asymptotic expansion and the fact that $O(t^{1/6}w^{-7/4}) = O(t^{-1/6} w^{-5/4}) = O(t^{-1})$ on $\partial \Da$. As both the determinants of $A(k)$ and $N(k)$ are constant equal to $1$, we also have the estimate
\beq\label{AN-1}
A^{-1}(k) = N^{-1}(k) + O(t^{-1}), \quad k \in \partial \Da.
\eeq  

In the Section 5 we will also need a local solution $Q(k)$ of the model problem around the point $\I c$.  We choose a fixed disc $\Dc$ around the point $\I c$ with radius small enough, so that $\Da \cap \Dc = \Sigma^U_1 \cap \Dc = \emptyset$, and set
\beq\label{Q}
   Q(k) = 
   \frac{1}{\sqrt{2}}\begin{pmatrix}
   (k-\I c)^{1/4} & -(k-\I c)^{1/4}
   \\
   (k-\I c)^{-1/4} & (k-\I c)^{-1/4}
   \end{pmatrix}, \quad k \in \Dc
\eeq
where the fourth roots have a branch cut on $\Dc \cap [\I a, \I c]$, and the choice of the branch is irrelevant. As we will see in Section 5, the exact form of $Q(k)$ for $k \in \partial \Dc$ will not matter. Note that the determinant of $Q(k)$ is constant equal to $1$.

\section{Singular integral equations}
Our goal is to show that the contributions coming from the vicinities of $\pm \I a$ are small, such that they do not affect the leading asymptotics of the KdV equation. This can be achieved by reformulating our R-H problems as a singular integral equations and is a rigorous justification of Theorem 5.1 in \cite{EGKT}, giving uniform error estimates of order $O(t^{-1})$. Again, the arguments follow a similar line to the ones given in \cite{AELT}, \cite{EPT} and \cite{NOEMA}, except for the final analysis, which omits the construction of a global matrix-valued model solution and the small norm R-H problem. Instead, we show directly the invertibility of the corresponding singular integral operators, when restricted to certain symmetric $L^2$-spaces. Relevant literature on Cauchy operators and their connection with R-H problems can be found in \cite{BK97}, \cite{PD}, \cite{JL0}. We shall review the essential results here. 

We write $\mathcal{C}^{\Gamma}$ for the Cauchy operator defined on $L^2(\Gamma)$,
\beq\nn
\mathcal{C}^\Gamma: L^2(\Gamma) \rightarrow \mathcal{O}(\C \setminus \Gamma), \hspace{10pt} f(k) \mapsto \mathcal{C}^\Gamma(f)(k) := \dfrac{1}{2\pi \I} \int_\Gamma f(s) \dfrac{ds}{s-k},
\eeq
where $\mathcal O(\C \setminus \Gamma)$ denotes the space of holomorphic functions on $\C \setminus \Gamma$. Here, we are assuming that $\Gamma$ has an orientation and that the family of functions $(s-k)^{-1}$ are in $L^2(\Gamma)$ for $k \in \C \setminus \Gamma$. We define the operators $\mathcal{C}^\Gamma_-$, $\mathcal{C}^\Gamma_+$ by
\beq\nn
\mathcal{C}^\Gamma_\pm(f)(k) = \lim\limits_{z \rightarrow k \pm} \mathcal{C}^\Gamma(f)(z),
\eeq
where the limit is assumed to be nontangential. Standard theory tells us that if $\Gamma$ is a Carleson contour, the nontangential limit exists a.e.~and $\mathcal{C}^\Gamma_\pm$ will be a bounded operator from $L^2(\Gamma)$ to itself (see \cite{BK97}, \cite[Prop.~3.11]{JL0}). 

For a general $2 \times 2$ matrix-valued function $u(k)\in L^\infty(\Gamma;\Ctt)$ we can define with slight abuse of notation the following operator:
\beq\nn
\mathcal{C}^{\Gamma}_u : L^2(\Gamma;\Ct)\rightarrow L^2(\Gamma;\Ct), \hspace{10pt} f \mapsto \mathcal{C}^\Gamma_-(f \cdot u),
\eeq
where $\Gamma$ is a Carleson jump contour. Here, $\mathcal{C}^\Gamma_-$ acts componentwise on vector-valued entries. We assume that $\Gamma$ is invariant with respect to the transformation $k \rightarrow -k$, and that sequences converging to the positive side still converge after this transformation to the positive side. This is a different convention then the one we used in Section 2 and Theorem \ref{RHPfor_m}, in order to simplify computations that follow. The jump matrix $v(k)$ (and also $u(k) := v(k) - \id$) must now satisfy
\beq\nn
v(-k) = 
\sigma_1
v(k)
\sigma_1, \qquad k \in \Gamma,
\eeq
which is the same as \eqref{vSym} when taking into account the different orientation. By the previous arguments, we have that $\Vert\mathcal{C}^{\Gamma}_u \Vert_{L^2(\Gamma;\Ct)} \leq C \Vert u \Vert_{L^\infty(\Gamma;\Ctt)}$, where $C$ is the operator bound of $\mathcal{C}^{\Gamma}_-$ in $L^2(\Gamma;\Ct)$.  
\begin{remark}
We emphasise that the orientation is essentially a free choice. While some theorems in the literature (c.f.~\cite{JL0}) require particular orientation conventions, one can always fulfill them by changing orientations of certain arcs and inverting the corresponding jump matrix. 
\end{remark}

Next, let us consider the following integral equation
\beq \nonumber
(\id - \mathcal{C}^\Gamma_u)\phi(k) = \mathcal{C}^\Gamma_-((1 \hspace{7pt} 1)u), 
\eeq
where we also require that the matrix entries of $u(k)$ are in $L^2(\Gamma)$ in order for the right-hand side to be well-defined. It turns out that there is a bijective correspondence between solutions of the above equation, and vector solutions of the R-H problem on the contour $\Gamma$ with jump matrices $v(k) = \id + u(k)$ (in the $L^2$-setting) given by
\begin{equation}
\nonumber
\begin{aligned}
\phi(k) &\longrightarrow m(k) := (1 \hspace{7pt} 1) + \dfrac{1}{2 \pi \I} \int_\Gamma (\phi(s) + (1 \hspace{7pt} 1)) u(s) \dfrac{ds}{s-k}
\\
m(k) &\longrightarrow \phi(k) := m_-(k) - (1 \hspace{7pt} 1),
\end{aligned}
\end{equation}
see \cite{XZ}. 

As the uniqueness statements in Theorems \ref{RHPfor_m} and \ref{modUniq} only hold for vector solutions satisfying the symmetry condition \eqref{eq:symcond}, we would need to restrict the operator $\mathcal{C}^{\Gamma}_u$ to $L^2_s(\Gamma;\Ct)$, which we define to be the Hilbert space of functions $\phi(k)\in L^2(\Gamma;\Ct)$, satisfying
\beq\nn
\phi(-k) = \phi(k)
\sigma_1.
\eeq
Hence, we define
\beq \nonumber
\mathcal{S}^\Gamma_u \colon L^2_s(\Gamma;\Ct) \rightarrow L^2_s(\Gamma;\Ct), \quad f \mapsto \mathcal{C}^\Gamma_u f
\eeq
to be the restriction of $\mathcal{C}^\Gamma_u$ to the space of symmetric functions $L^2_s(\Gamma;\Ct)$. Analogously, $\mathcal{A}^\Gamma_u$ denotes the restriction of $\mathcal{C}^\Gamma_u$ to the space $L^2_a(\Gamma;\Ct)$ of  antisymmetric functions satisfying
\beq\nn
\phi(-k) = -\phi(k)
\sigma_1.
\eeq
To see that $\mathcal{S}_u^\Gamma$, $\mathcal{A}_u^\Gamma$ are well-defined, i.e.~that $\mathcal{C}_u^\Gamma$ indeed maps $L^2_s(\Gamma;\Ct)$ ($L^2_a(\Gamma;\Ct)$) to $L^2_s(\Gamma;\Ct)$ ($L^2_a(\Gamma;\Ct)$), we  define the operator
\begin{align*}\nn
H \colon L^2(\Gamma;\Ct) \rightarrow L^2(\Gamma;\Ct),
\qquad
H\phi(k) := \phi(-k)
\sigma_1.
\end{align*}
Note that $L^2_s(\Gamma;\Ct)$ is the eigenspace of $H$ with eigenvalue $1$, while $L^2_a(\Gamma;\Ct)$ is the eigenspace with eigenvalue $-1$. Next, let us assume that $u(k)$ (or analogously $v(k)$) satisfies the symmetry condition
\beq\nn
u(-k) = 
\sigma_1
u(k)
\sigma_1.
\eeq
We can then compute that $H$ is a symmetry of $\mathcal{C}^\Gamma_u$, i.e.~commutes with it
\begin{align*}
\begin{split}
\mathcal{C}^\Gamma_u H \phi (k) &= \lim\limits_{k' \rightarrow k-}\dfrac{1}{2\pi \I} \int_\Gamma H\phi(s) u(s) \dfrac{ds}{s-k'}
\\
&= \lim\limits_{k' \rightarrow k-}\dfrac{1}{2\pi \I} \int_\Gamma \phi(-s) 
\sigma_1
u(s) \dfrac{ds}{s-k'}
\\
&= \lim\limits_{-k' \rightarrow -k-}\dfrac{1}{2\pi \I} \int_\Gamma \phi(s) 
u(s) \dfrac{ds}{s-(-k')}
\sigma_1
\\
&=
\mathcal{C}^\Gamma_u \phi(-k)
\sigma_1
\\
&=
H\mathcal{C}^\Gamma_u \phi(k).
\end{split}
\end{align*}
Hence, we conclude
\beq\nn
[\mathcal{C}^\Gamma_u, H] = 0 
\eeq
which implies that the range of $\mathcal{S}^\Gamma_u$ lies in the space of symmetric functions, while the range of $\mathcal{A}^\Gamma_u$ lies in the space of antisymmetric functions, as claimed. 
\section{Main result}

Next, we define two new R-H problems for which we can write down their unique solutions. Recall that $\Da$ a disc around $\I a$ with radius smaller than $\min(c- a, a-b)$, such that $k \rightarrow w$ is bijective for $k \in \Da$. In particular, $\Sigma^U$ is assumed to be  disjoint from $\Da$. Note that the radius can be chosen to be constant with respect to small variations of $\xi$ (see Remark \ref{rmkxi}). Let $\Dma$ be the image of $\Da$ under the transformation $k \rightarrow -k$ and assume that $\partial \Da$ and $\partial \Dma$ are oriented counterclockwise. Analogous properties are assumed for $\Dc$ and $\Dmc$, which are small discs centered at $\I c$ and $-\I c$ respectively, where the radius is chosen small enough to avoid  intersections with $\Sigma^U_1$ and $\Sigma^L_1$. We define $\mathbb{D}_\cup := \Da \cup \Dma \cup \Dc \cup \Dmc$. As anticipated in the last section, we invert the orientation of $\Sigma \cap \lbrace z \colon \im z < 0 \rbrace$ to simplify the analysis. 

One can check that the two R-H problems given below will satisfy the symmetry condition for the contour and for the jump matrices specified in Section 4. The same goes for the solutions, which are assumed to satisfy condition \eqref{eq:symcond}. As we want to use the integral operator theory from the previous section, all limits to the contour are assumed to hold locally in the $L^2$-sense. This limit notion turns out to be equivalent to the more classical notion of continuous limits with at most fourth root singularities, as used in Theorem \ref{modUniq}. The justification for this equivalence will be given in the proof of Theorem 5.6, and is for now assumed.

\subsection{Riemann--Hilbert problem I}
Find a vector-valued function $m^{I}(k)$, holomorphic in $\C \setminus \Sigma^{I}$, where $\Sigma^{I} = ([-\I c, \I c]\setminus  \mathbb{D}_\cup)  \cup \partial \mathbb{D}_\cup$, satisfying: 
\begin{enumerate}[(i).]
\item  The jump condition $m^{I}_+(k) = m^{I}_-(k)v^{I}(k)$:
\beq\nn
v^{I}(k) = 
\begin{cases}
v^\md(k), \quad & k \in (0, \I c] \setminus \mathbb{D}_\cup
\\
[v^\md(k)]^{-1}, \quad & k \in [-\I c, 0) \setminus \mathbb{D}_\cup
\\
N^{-1}(k), \quad & k\in \partial \Da
\\
\sigma_1 N^{-1}(-k) \sigma_1, \quad & k \in \partial \Dma
\\
Q^{-1}(k), \quad & k\in\partial \Dc
\\
\sigma_1 Q^{-1}(-k) \sigma_1, \quad & k\in\partial \Dmc
\end{cases}
\eeq
with $N(k)$ and $Q(k)$ defined by \eqref{N} and \eqref{Q},
\item the symmetry condition
\beq\nn
m^{I}(-k) = m^{I}(k) 
\begin{pmatrix}
0 & 1
\\
1 & 0
\end{pmatrix},
\eeq
\item 
and the normalization condition
\beq\nn
\lim\limits_{k \rightarrow \infty} m^{I}(k)= (1 \hspace{7pt} 1).
\eeq
\end{enumerate}
The unique solution has the form
\begin{align} \label{solutionRHP1}
m^{I}(k) =
\begin{cases}
 m^{\text{mod}}(k), & k \in \C \setminus \mathcal \D_\cup
\\
 m^{\text{mod}}(k)N^{-1}(k), & k \in \Da
\\
m^{\text{mod}}(k)\sigma_1 N^{-1}(-k)\sigma_1, & k \in \Dma
\\
 m^{\text{mod}}(k)Q^{-1}(k), & k \in \Dc
\\
m^{\text{mod}}(k)\sigma_1 Q^{-1}(-k)\sigma_1, & k \in \Dmc
\end{cases}
\end{align}
where $m^\md(k)$ satisfies the model R-H problem from Theorem \ref{modUniq} and is taken from \cite{EGKT}. Indeed, $m^I(k)$ defined in \eqref{solutionRHP1} has at most square root singularities at the points $\pm \I c$, $\pm \I a$, but no jumps inside $\D_{\cup}$, implying that it is in fact analytic inside $\D_{\cup}$. 

To write down the solution $m^\md(k)$, we need to introduce the genus one Riemann surface $\mathbb{X}$, which is obtained by gluing two copies (sheets) of $\C \setminus ([-\I c, -\I a] \cup [\I a, \I c])$ along the cuts $[-\I c, -\I a]$ and $[\I a, \I c]$. The corresponding canonical homology basis of cycles $\lbrace \bf a, \bf b \rbrace$ is defined as follows: The $\bf a$-cycle starts on the upper sheet from the left side of
the cut $[\I a,\I c]$, continues on the upper sheet to the left part of $[-\I c, -\I a]$ and returns after changing sheets. The $\bf b$-cycle surrounds the cut $[\I a, \I c] $ counterclockwise on the upper sheet.

Having defined the homology basis, the normalized holomorphic differential $d\omega$ is defined by
\beq\nonumber
d\om=2\pi\I\left(\int_{\bf a} \frac{d \zeta}{\sqrt{(\zeta^2+c^2)(\zeta^2+a^2)}}\right)^{-1}\frac{d\zeta}{\sqrt{(\zeta^2+c^2)(\zeta^2+a^2)}},
\eeq
where the square root has branch cuts on $[-\I c, -\I a]\cup [\I a, \I c]$, and is positive at $\zeta = 0$. Note that $\int_{\bf a}d\om=2\pi\I$. It turns out that the Riemann surface $\mathbb{X}$ can be characterized by the half-period ratio $\tau$ given by
\begin{align*}
    \tau=\int_{\bf b} d\om< 0
\end{align*}
and is biholomorphically equivalent to $\C / \Lambda$, where $\Lambda = \lbrace 2\pi\I n  + \tau \ell : n, \ell \in \mathbb{Z} \rbrace$. To explicitly write down the biholomorphism, we need to define the Abel map:
\begin{align*}
    \mathbf{A}: \mathbb{X} \rightarrow \C / \Lambda, \quad p \mapsto \int_{\I c}^p d\omega.
\end{align*}
However, in our R-H setting, we will restrict the path of integration to $\C \setminus [-\I c, -\I c]$ on the upper sheet, in which case $\mathbf A = \mathbf A(k)$ defines a mapping from $\C \setminus [-\I c, -\I c]$ to $\C$ and satisfies the following properties (cf.~\cite[Sect.~5]{EGKT}):
\begin{enumerate}[(i).]
    \item $\mathbf A_l(k) = -\mathbf A_r(k) \ \mod(2\pi \I), \ \ k \in [-\I c, -\I a]\cup [\I a, \I c]$
    \item $\mathbf A_l(k) - \mathbf A_r(k) = \tau, \ \ k \in [-\I a, \I a]$,
    \item $\mathbf A(-k) = -\mathbf A(k)+\pi \I, \ \ k\in \C \setminus [-\I c, \I c]$,
    \item $\mathbf A_l(k) =  \frac{\tau}{2} + O(\sqrt{k-\I a}), \ \ \mathbf A_l(k) =  \frac{\tau}{2} +\pi \I + O(\sqrt{k+\I a}) \ $ as $k\to \pm \I a$,
    \item $\mathbf A(\infty) = \frac{\pi\I}{2}$.
\end{enumerate}
Here the subscript $l$ (resp.~$r$) denotes the left (resp.~right) limit to $[-\I c, \I c]$.

The final ingredient for writing down the explicit solution to the model R-H problem is the Jacobi theta function:
\begin{align}\label{thetaFct}
    \theta(z) = \theta(z|\tau) = \sum_{n \in \mathbb{Z}} \exp \Big\lbrace \frac{1}{2} \tau n^2 + n z  \Big\rbrace,
\end{align}
which satisfies the quasiperiodicity condition:
\begin{align}\label{quasiper}
    \theta(z+2\pi\I n+ \tau \ell) = \theta(z)\exp \Big\lbrace -\frac{1}{2} \tau \ell^2 - \ell z  \Big\rbrace
\end{align}
and has simple zeros precisely at $z \equiv \pi\I + \tau/2  \mod \Lambda$. More generally, corresponding to any $\tau$ with $\re \tau < 0$, there are four Jacobi theta functions, with the one in \eqref{thetaFct} being commonly denoted by $\theta_3(z)$ in the literature. For a thorough treatment of Jacobi theta functions and their various properties and notational conventions, see \cite[Sect.~6]{M67}.

The model solution to the R-H problem from Theorem \ref{modUniq} can be found in \cite[Sect.~5]{EGKT} and is given by
\begin{align}\label{modsol}
\begin{split}
 m_1^{\text{\text{mod}}}(k,t) &= \sqrt[4]{\frac{k^2 + a^2}{k^2+c^2}}\frac{\theta\left(\mathbf A(k) - \I\pi-\frac{\I t\hat B}{2}\right)\theta\left(\mathbf A(k) -\frac{\I t\hat B}{2}\right)\theta^2\left(\frac{\pi\I}{2}\right)}
{{\theta\left(\mathbf A(k) - \I\pi\right)\theta\left(\mathbf A(k) \right)\theta\left(\frac{\pi\I}{2}-\frac{\I t\hat B}{2}\right)\theta\left(\frac{\pi\I}{2}+\frac{\I t\hat B}{2}\right)}},\\
m_2^{\text{\text{mod}}}(k,t) &= \sqrt[4]{\frac{k^2 + a^2}{k^2+c^2}}\frac{\theta\left(-\mathbf A(k) - \I\pi-\frac{\I t\hat B}{2}\right)\theta\left(-\mathbf A(k) -\frac{\I t\hat B}{2}\right)\theta^2\left(\frac{\pi\I}{2}\right)}
{{\theta\left(-\mathbf A(k) - \I\pi\right)\theta\left(-\mathbf A(k) \right)\theta\left(\frac{\pi\I}{2}-\frac{\I t\hat B}{2}\right)\theta\left(\frac{\pi\I}{2}+\frac{\I t\hat B}{2}\right)}},
\end{split}
\end{align}
where the fourth root has branch cuts on $[-\I c, -\I a]\cup[\I a, \I c]$ and converges to $1$ at infinity. Properties (i) and (ii) of the Abel map $\mathbf{A}(k)$ together with the quasiperiodicity condition \eqref{quasiper} of $\theta(z)$, imply the jump condition \eqref{modJump}. Property (iii) implies the symmetry condition \eqref{eq:symcond}, while property (iv) together with the location of simple zeros of $\theta(z)$ guarantees that $m^\md(k)$ defined in \eqref{modsol} has at most fourth root singularities at the contour. Property (v) implies the normalization \eqref{eq:normcond}. For a step by step derivation of  \eqref{modsol} see \cite{PT}. Note that uniqueness of $m^{\text{mod}}(k)$ implies uniqueness of $m^{I}(k)$, as any model vector solution would give rise to a solution to R-H problem I via \eqref{solutionRHP1}.

\subsection{Riemann--Hilbert problem II}
Find a vector-valued function $m^{II}(k)$, holomorphic in $\C \setminus \Sigma^{II}$, where $\Sigma^{II} = (\Sigma \setminus \mathbb{D}_\cup) \cup \partial \mathbb{D}_\cup$, satisfying: 
\begin{enumerate}[(i).]
\item  The jump condition $m^{II}_+(k) = m^{II}_-(k)v^{II}(k)$:
\beq\nn
v^{II}(k) = 
\begin{cases}
v^{(2)}(k), \quad & k\in (\Sigma \setminus \D_\cup)\cap \lbrace z \colon \im z > 0 \rbrace
\\
[v^{(2)}(k)]^{-1}, \quad & k\in (\Sigma \setminus \D_\cup)\cap \lbrace z \colon \im z < 0 \rbrace
\\
A^{-1}(k), \quad & k\in \partial \Da
\\
\sigma_1 A^{-1}(-k) \sigma_1, \quad & k \in \partial \Dma
\\
Q^{-1}(k), \quad & k\in\partial \Dc
\\
\sigma_1 Q^{-1}(-k) \sigma_1, \quad & k\in\partial \Dmc
\end{cases}
\eeq
with $A(k)$ and $Q(k)$ defined by \eqref{airyMatrix} and \eqref{Q},
\item the symmetry condition
\beq\nn
m^{II}(-k) = m^{II}(k) 
\begin{pmatrix}
0 & 1
\\
1 & 0
\end{pmatrix},
\eeq
\item 
and the normalization condition
\beq\nn
\lim\limits_{k \rightarrow \infty} m^{II}(k)= (1 \hspace{7pt} 1).
\eeq
\end{enumerate}
The unique solution has the form
\begin{align} \label{solutionRHP2}
m^{II}(k) =
\begin{cases}
 m^{(2)}(k), & k \in \C \setminus \mathbb{D}_\cup
\\
 m^{(2)}(k)A^{-1}(k), & k \in \Da
\\
m^{(2)}(k)\sigma_1 A^{-1}(-k)\sigma_1, & k \in \Dma
\\
 m^{(2)}(k)Q^{-1}(k), & k \in \Dc
\\
m^{(2)}(k)\sigma_1 Q^{-1}(-k)\sigma_1, & k \in \Dmc
\end{cases}
\end{align}
where again uniqueness of $m^{(2)}(k)$ implies uniqueness of $m^{II}(k)$, which is analytic in $\D_\cup$. 

Let us remark once again that $N^{-1}(k)$, $Q^{-1}(k)$ and $A^{-1}(k)$ are chosen to cancel the jump matrices in $\mathbb{D}_\cup$. While R-H problem I has only jumps on the imaginary segments $[-\I c, \I c] \setminus \mathbb{D}_\cup$ and $\partial \mathbb{D}_\cup$, R-H problem II additionally has jumps uniformly converging to the identity matrix away from the discs.

Note that $v^I(k) = v^{II}(k)$ for $k \in [-\I c, \I c] \setminus \big(\D_\cup \cup [\I a, \I b]\cup[-\I a, -\I b]\big)$ and for $k \in \partial\Dc \cup \partial\Dmc$. On $\partial\Da \cup \partial\Dma$ we have the estimate \eqref{AN-1}, hence we can conclude that
\beq\nn
\Vert v^{II} - v^{I} \Vert_{L^\infty(\Sigma^{II};\Ctt)} = O(t^{-1}),
\eeq
where we set $v^{I}(k) \equiv \id$ for $k \in \Sigma^{II} \setminus \Sigma^{I}$. 

\subsection{Invertibility of $\id - \mathcal{S}^{\Sigma^{II}}_{u^{I}}$}
We will now use the bijection between solutions of R-H problems and solutions to singular integral equations, stated in Section 4. In particular, we will analyse the singular integral operator $\id - \mathcal{S}^{\Sigma^{II}}_{u^{I}}$ which appears in the equation
\beq\nn
(\id - \mathcal{S}^{\Sigma^{II}}_{u^{I}})\phi^I(k) = \mathcal{C}^{\Sigma^{II}}_-((1 \hspace{7pt} 1)u^{I}), 
\eeq
where $\phi^I(k) = m^{I}_-(k)-(1 \hspace{7pt} 1)$. Recall that we set $u^I(k) = v^I(k)-\id=0$ for $k\in\Sigma^{II} \setminus \Sigma^I$. The following result taken from \cite[Sect.~2]{DZsob} will play a crucial role:
\begin{theorem}\label{ThmInh}
Assume $v(k) \in L^\infty(\Gamma;\Cnn)$ is an $n \times n$ matrix-valued function and $\Gamma$ is a Carleson contour. Then the singular integral operator
\beq
\id - \mathcal C_u^\Gamma \colon L^2(\Gamma;\Cn) \rightarrow L^2(\Gamma;\Cn)
\eeq
with $u(k) = v(k)-\id$ is bijective, if and only if for each $\lambda(k) \in L^2(\Gamma;\Cn)$ there is a unique solution to the following inhomogeneous R-H problem: 

Find a vector-valued function $\mu(k)$, holomorphic for $k\in\C \setminus \Gamma$, satisfying
\begin{align*}
    \mu_+(k) &= \mu_-(k) v(k) + \lambda(k), \quad \mu_\pm(k) \in L^2(\Gamma;\Cn),
    \\
    \lim_{k\to\infty} \mu(k) &= 0.
\end{align*}
\end{theorem}
Moreover, we will also use the following corollary, again taken from \cite[Sect.~2]{DZsob}:
\begin{corollary}\label{LongCor}
Assume $v(k) \in L^\infty(\Gamma_1;\Cnn)$ is an $n \times n$ matrix valued function and $\Gamma_\cup = \Gamma_1  \dot\cup \, \Gamma_2$, where $\Gamma_i$ for $i=1,2$ are Carleson contours. Define 
\begin{align}
   \widetilde v(k) =
   \begin{cases}
    v(k) & k\in \Gamma_1
    \\
    \id & k\in \Gamma_2
   \end{cases}, \quad \widetilde u(k) = \widetilde v(k)-\id.
\end{align}
Then the operator $\id-\mathcal C^{\Gamma_1}_u$ is invertible if and only the operator $\id - \mathcal C^{\Gamma_\cup}_{\widetilde u}$ is invertible.
\end{corollary}
\begin{proof}
We will use the characterization from Theorem \ref{ThmInh}. First assume $\id-\mathcal C^{\Gamma_1}_u$ is invertible and take $\widetilde \lambda(k) \in L^2(\Gamma_\cup;\Cn)$ arbitrary. We know that the inhomogeneous R-H problem
\begin{align*}
    \mu_+(k) &= \mu_-(k)v(k) + \mathcal{C}^{\Gamma_\cup}_-(\wti \lambda)(k)u(k), \quad \mu_\pm(k) \in L^2(\Gamma_1;\Cn),
    \\
    \lim_{k\to \infty} \mu(k) &= 0, 
\end{align*}
has a unique solution $\mu(k)$ holomorphic for $k \in \C \setminus \Gamma_1$. Define $\widetilde \mu(k) = \mu(k) + \mathcal{C}^{\Gamma_\cup}(\widetilde \lambda)(k)$. Then one can compute that $\wti \mu(k)$ satisfies
\begin{align*}
    \wti \mu_+(k) &= \wti \mu_-(k)\wti v(k) + \wti \lambda(k), \quad \wti \mu_\pm(k) \in L^2(\Gamma_\cup;\Cn),
    \\
    \lim_{k\to \infty} \widetilde \mu(k) &= 0. 
\end{align*}

Now assume that $\id - \mathcal C^{\Gamma_\cup}_{\wti u}$ is invertible. For any fixed $\lambda(k) \in L^2(\Gamma;\Cn)$ define
\begin{align*}
    \wti \lambda(k) = \begin{cases}
     \lambda(k), & k \in \Gamma_1
     \\
     0, & k \in \Gamma_2
    \end{cases}.
\end{align*}
It follows that the unique $\wti \mu(k)$ corresponding to the inhomogeneity $\wti \lambda(k)$ will solve
\begin{align*}
    \wti \mu_+(k) &= \wti \mu_-(k) v(k) + \lambda(k), \quad k \in \Gamma_1
    \\
    \wti \mu_+(k) &= \wti \mu_-(k), \quad k \in \Gamma_2
    \\
    \lim_{k\to0} \wti \mu(k) &= 0, \qquad \wti \mu_\pm(k)|_{k \in \Gamma_1} \in L^2(\Gamma_1).
\end{align*}
Hence we can choose $\mu(k) = \wti \mu(k)$ for $k \in \C \setminus \Gamma_\cup$, and $\mu(k) = \wti \mu_\pm(k)$ for $k \in \Gamma_2$, to be a solution of the inhomogeneous R-H problem with inhomogeneity $\lambda(k)$. 

So far we have shown that solvability of the two inhomogeneous R-H problems is equivalent. The statement about unique solvability now follows from the simple observation that the corresponding vanishing problems (taking $\lambda(k) \equiv 0$ on $\Gamma_1$, $\wti \lambda(k) \equiv 0$ on $\Gamma_\cup$) reduce to exactly the same homogenous R-H problem with jump contour $\Gamma_1$. 
\end{proof}
\begin{remark}
The statement of Theorem \ref{ThmInh} and Corollary \ref{LongCor} remains true if we assume the symmetries described in Sections 4 to hold, and consider the operator $\id - \mathcal S_u^\Gamma$ ($\id - \mathcal A_u^\Gamma$) instead, provided we require the inhomogeneity $\lambda(k)$ to be in $L^2_s(\Gamma;\Ct)$ ($L^2_a(\Gamma;\Ct)$) as well. 
\end{remark}
Corollary \ref{LongCor} implies that showing the invertibility of $\id - \mathcal{S}^{\Sigma^{II}}_{u^{I}}$ is equivalent to showing the invertibility of $\id - \mathcal{S}^{\Sigma^{I}}_{u^{I}}$, which is the operator we will consider next. This can be achieved by constructing a second linearly independent vector-valued solution of the model problem. Equivalently we are looking for an invertible matrix-valued solution to the model problem. We will consider an antisymmetric vector solution $n^\md(k)$ satisfying:
\begin{align} \label{anticond}
    n^{\text{mod}}(-k) = -n^{\text{mod}}(k)\sigma_1.
\end{align}
An explicit formula for such an $n^\md(k)$ is given by
\begin{align*}
    n_1^{\text{mod}}(k,t) &= \sqrt[4]{\frac{k^2 + a^2}{k^2+c^2}}\frac{\theta\left(\mathbf A(k) - \I\pi-\frac{\I t\hat B}{2}-\frac{\tau}{2}\right)\theta\left(\mathbf A(k) -\frac{\I t\hat B}{2}+\frac{\tau}{2}\right)}{{\theta\left(\mathbf A(k) - \I\pi\right)\theta\left(\mathbf A(k) \right)}},\\
n_2^{\text{mod}}(k,t) &= \sqrt[4]{\frac{k^2 + a^2}{k^2+c^2}}\frac{\theta\left(-\mathbf A(k) - \I\pi-\frac{\I t\hat B}{2}-\frac{\tau}{2}\right)\theta\left(-\mathbf A(k) -\frac{\I t\hat B}{2}+\frac{\tau}{2}\right)}
{{\theta\left(-\mathbf A(k) - \I\pi\right)\theta\left(-\mathbf A(k) \right)}}.
\end{align*}
Note that unlike $m^\md(k,t)$, the vector-valued function $n^{\text{mod}}(k,t)$ is not normalized at $k = \infty$. The reason is that 
\begin{align*}
     \lim_{k\to\infty} n_1^{\text{mod}}(k,t) = - \lim_{k\to\infty} n_2^{\text{mod}}(k,t) =  \frac{\theta\left(\mathbf - \frac{\I\pi}{2}-\frac{\I t\hat B}{2}-\frac{\tau}{2}\right)\theta\left(\frac{\I \pi}{2} -\frac{\I t\hat B}{2}+\frac{\tau}{2}\right)}{\theta^2\left(\frac{\I\pi}{2}\right)}
\end{align*}
vanishes for $t\hat B= \pi(2n+1)$, $n \in \Z$. This fact is related to the nonexistence of an invertible holomorphic matrix solution as explained in \cite{EPT}. Indeed, denote by $T_0 = \lbrace \pi(2n+1)/\hat B \colon n \in \Z \rbrace$ the set of times for which $n^\md(k,t)$ vanishes at infinity.  We then conclude that
\begin{align*}
    m^{\text{mod}}(k,t_0) \propto k \, n^{\text{mod}}(k,t_0), \qquad t_0 \in T_0,
\end{align*}
as $k n^\md(k,t_0)$ is a symmetric solution of the model R-H problem and hence by Theorem \ref{modUniq} must be proportional to $m^\md(k,t_0)$. Note that for $t \not \in T_0$, $m^{\text{mod}}(k,t)$ and $n^{\text{mod}}(k,t)$ are linearly independent.

To show invertibility of $\id - \mathcal{S}^{\Sigma^{I}}_{u^{I}}$ we will need another linearly independent symmetric vector-valued function $\ell^\md(k)$, satisfying the jump condition from Theorem \ref{modUniq}, also for $t \in T_0$. However, instead of the normalization condition \eqref{eq:normcond}, we allow $\ell^\md(k) = O(k)$ as $k \to \infty$. As any holomorphic vector solution of the model problem that remains bounded at infinity must be already a multiple of $m^\md(k)$, this is a necessary requirement to obtain linear independence. In fact, the unboundedness of $\ell^\md(k)$ will be the reason why we need to consider the operator $\id - \mathcal{S}^{\Sigma^{I}}_{u^{I}}$ instead of $\id - \mathcal{C}^{\Sigma^{I}}_{u^{I}}$ (see condition \eqref{beta}). We choose for $\ell^\md(k)$:
\begin{align}\nonumber
     \ell_1^{\text{mod}}(k,t) &= \sqrt[4]{\frac{k^2 + a^2}{k^2+c^2}}\frac{\theta\left(\mathbf A(k) - \frac{\I\pi}{2}-\frac{\I t\hat B}{2}-\frac{\tau}{2}\right)\theta\left(\mathbf A(k)+\frac{\I \pi}{2} -\frac{\I t\hat B}{2}-\frac{\tau}{2}\right)}{{\theta\left(\mathbf A(k) - \frac{\I\pi}{2}-\frac{\tau}{2}\right)\theta\left(\mathbf A(k) + \frac{\I\pi}{2}-\frac{\tau}{2} \right)}},\\\label{ell}
\ell_2^{\text{mod}}(k,t) &= \sqrt[4]{\frac{k^2 + a^2}{k^2+c^2}}\frac{\theta\left(-\mathbf A(k) - \frac{\I\pi}{2}-\frac{\I t\hat B}{2}-\frac{\tau}{2}\right)\theta\left(-\mathbf A(k)+\frac{\I \pi}{2} -\frac{\I t\hat B}{2}-\frac{\tau}{2}\right)}
{{\theta\left(-\mathbf A(k) - \frac{\I\pi}{2}-\frac{\tau}{2}\right)\theta\left(-\mathbf A(k) + \frac{\I\pi}{2}-\frac{\tau}{2} \right)}}.
\end{align}
One can check that $\ell^\md(k)$ indeed satisfies the symmetry condition \eqref{eq:symcond} and grows linearly for $t\hat B \not = 2\pi n$, $n \in \mathbb{Z}$.
\begin{remark}
We do not claim uniqueness of $\ell^\emph{mod}(k)$ satisfying the above requirements. 
\end{remark}
\begin{remark}
A systematic approach to derive explicit formulas for $m^\emph{mod}(k)$, $n^\emph{mod}(k)$, and $\ell^\emph{mod}(k)$ can be found in \cite{PT} and is based on the reformulation of the model R-H problem from Theorem \ref{modUniq} as a scalar-valued R-H problem on the torus.
\end{remark}
%%%%%%%%OLD \ELL%%%%%
\begin{comment}
Choose a fixed $t_0 \in T_0$ and define for $t \in T_\varepsilon \setminus \lbrace t_0 \rbrace$, where $T_\varepsilon := (t_0-\varepsilon, t_0 + \varepsilon)$, $\varepsilon \in (0, \pi/\hat B)$, the following vector valued function:
\begin{align*}
    \ell^{\text{mod}}(k,t) = \frac{1}{t-t_0}(m^{\text{mod}}(k,t)- \eta(t)k n^{\text{mod}}(k,t)), \quad k \in \C \setminus [-\I c, \I c].
\end{align*}
Here $\eta(t)$ is chosen to be a smooth function for $t \in T_\varepsilon$ such that
\begin{align*}
    m^{\text{mod}}(k,t_0)- \eta(t_0)k n^{\text{mod}}(k,t_0) = 0
\end{align*}
It follows from the smoothness properties of the Jacobi theta function and $\eta(t)$, that $\ell^\md(k,t)$ can be defined by continuity for $t \in T_\varepsilon$, and satisfies
\begin{align*}
    \ell^\md_+(k,t) &= \ell^\md_-(k,t)v^\md(k,t), \quad k \in [-\I c, \I c]
    \\
    \ell^\md(-k,t) &= \ell^\md(k,t)\sigma_1, \quad k \in \C \setminus [-\I c, \I c]
    \\
    \ell^\md(k,t) &= O(k) \quad \mbox{for} \ k \to \infty.
\end{align*}
Moreover, one can check that $\ell^\md(k,t)$ does indeed grow at infinity for all $t \in T_\varepsilon$ assuming $\eta(t) \not = 0$, which necessary holds in a small vicinity of $t_0$. In particular,
\begin{align*}
    \ell^\md(k,t_0) = \eta(t_0)\frac{d}{dt} n_1^\md(\infty,t)|_{t=t_0}(1, -1), \quad \mbox{as} \ k \to \infty.
\end{align*}
\end{comment}

Analogously to \eqref{solutionRHP1}, we define the vector-valued functions $n^{I}(k,t)$, $\ell^{I}(k,t)$ by inverting locally around $\pm\I c$, $\pm \I a$ by the appropriate local matrix-valued solutions $N(k)$, $Q(k)$. 
Consider now the following inhomogeneous R-H problem, with inhomogeneity $\lambda(k) \in L^2_s(\Sigma^{I};\Ct)$:

Find a vector-valued function $\mu(k)$, holomorphic for $k \in \C \setminus \Sigma^{I}$, such that
\begin{align}\nonumber
    \mu_+(k) &= \mu_-(k) v^I(k)+\lambda(k), \quad \mu_\pm(k) \in L^2_s(\Sigma^{I};\Ct)
    \\\label{mu}
    \mu(-k) &= \mu(k) \sigma_1, \quad k \in \C \setminus \Sigma^{I}
    \\\nonumber
    \lim_{k\to\infty} \mu(k) &= 0.
\end{align}
\begin{theorem}
The inhomogeneous R-H problem \eqref{mu} has a unique solution for any $\lambda(k) \in L_s^2(\Sigma^{I};\Ct)$.
\end{theorem}
\begin{proof}
We make the following ansatz:
\begin{align*}
    \mathcal C^{\Sigma^{I}}(\alpha)(k) n^{I}(k) + \mathcal C^{\Sigma^{I}}(\beta)(k) \ell^{I}(k) = \mu(k),
\end{align*}
where the two functions $\alpha(k)$, $\beta(k) \in L^1(\Sigma^{I})$ need to be determined. The jump condition in \eqref{mu} is then equivalent to
\begin{align}\label{Ansatz}
    \alpha(k)n^{I}_+(k) + \beta(k) \ell^{I}_+(k) = \lambda(k).
\end{align}
We make the following two key observations, related to the $t$-dependence:
\vskip 5pt
\begin{enumerate}
    \item $n^{I}_\pm(k,t)$ and $\ell^{I}_\pm(k,t)$ are uniformly bounded on $\Sigma^{I}$ for all $t \in \R$,
    \\
    \item $\det\big( n^{I}_\pm(k,t)^\top, \ell^{I}_\pm(k,t)^\top\big) = \dd(t) \not= 0$ for all $t \in \R$.
\end{enumerate}
\vspace{5pt}
Note that for point (1), it is necessary that we work with the bounded contour $\Sigma^{I}$, rather than with the unbounded contour $\Sigma^{II}$, and that we inverted locally by the matrices $N(k,t)$ and $Q(k,t)$ to avoid the fourth root singularities of the initial solutions  $n^\md_+(k,t)$, $\ell^\md_+(k,t)$. Point (2) follows from the fact that the determinant must be an entire function, which, due to the symmetries of $n^\md(k,t)$ and $\ell^\md(k,t)$, is in fact bounded as $k \to \infty$, hence  constant for $k \in \C$. Let us in the following again denote by the subscript $l$ (resp.~$r$) the left (resp.~right) limit to the contour $\Sigma^I$ at the origin. We have the four equalities
\begin{align*}
    n_{1,l}^I(0,t)&=n_{1,r}^I(0,t)\E^{\I t \hat B},
    \\
    n_{2,l}^I(0,t)&=-n_{1,r}^I(0,t),
    \\
    \ell_{1,l}^I(0,t)&=\ell_{1,r}^I(0,t)\E^{\I t \hat B},
    \\
    \ell_{2,l}^I(0,t)&=\ell_{1,r}^I(0,t),
\end{align*}
which imply
\begin{align*}
    \dd(t) = \det\big( n^{I}_l(0,t)^\top, \ell^{I}_l(0,t)^\top\big) = 2\E^{\I t\hat B} n_{1,r}^I(0,t) \ell_{1,r}^I(0,t).
\end{align*}
Using the fact that  $\theta(z) = 0$ if and only if $z \equiv \I\pi + \tau/2 \mod \Lambda$, and $\mathbf{A}_{r}(0) = \I\pi/2 - \tau/2$, it follows that $\dd(t) \not = 0$ for all $t \in\R$. In fact, $\dd(t)$ stays away from $0$ due to its periodicity. In the following we shall again suppress the $t$-dependence.

Together, these observations imply that given any $\lambda(k) \in L^2_s(\Sigma^{I};\Ct)$, there exist unique $\alpha(k)$, $\beta(k) \in L^2(\Sigma^{I})$ given by
\begin{align*}
\alpha(k) = -\frac{\det \big(\ell^{I}_+(k)^\top, \lambda(k)^\top \big)}{\dd}, \qquad
    \beta(k) = \frac{\det \big(n^{I}_+(k)^\top, \lambda(k)^\top \big)}{\dd},
\end{align*}
such that \eqref{Ansatz} holds. This would be even true for $\widetilde \lambda(k) \in L^2(\Sigma^{I};\Ct)$. However, not every pair $\alpha(k)$, $\beta(k)$ will lead to a solution of the inhomogeneous R-H problem \eqref{mu}. Indeed, because $\ell^{I}(k)$ grows linearly for $k \to \infty$, at least for $t\hat B \not = 2\pi n$, $n \in \mathbb{Z}$, we require $\mathcal C^{\Sigma^{I}}(\beta)(k)$ to decay quadratically for $k \to \infty.$ This translates to the simple condition
\begin{align}\label{beta}
    \int_{\Sigma^{I}} \beta(k) \, dk = 0.
\end{align}
It remains to check that provided $\lambda(k) \in L^2_s(\Sigma^{I};\Ct)$, the condition \eqref{beta} will be satisfied, so that the $\mu(k)$ defined in \eqref{Ansatz} will indeed be a solution of \eqref{mu}. First observe that
\begin{align}\nonumber
    \beta(-k) = \frac{\det \big(n^{I}_+(-k)^\top, \lambda(-k)^\top \big)}{\dd} &= \frac{\det \big(-\sigma_1n^{I}_+(k)^\top, \sigma_1 \lambda(k)^\top \big)}{\dd}  
    \\\label{symbeta}
    &= \frac{\det \big(n^{I}_+(k)^\top,  \lambda(k)^\top \big)}{\dd}= \beta(k).
\end{align}
Similarly, we have $\alpha(-k) = -\alpha(k)$. Performing the change of variables $s = -k$, \eqref{symbeta} implies
\begin{align*}
   \int_{\Sigma^{I}} \beta(k) \, dk =  -\int_{-\Sigma^{I}} \beta(-s) \, ds = 
   -\int_{\Sigma^{I}} \beta(s) \, ds \quad \Rightarrow \quad \int_{\Sigma^{I}} \beta(k) \, dk = 0
\end{align*}
where we used that the orientation of the contour $\Sigma^{I}$ is invariant under the transformation $k \to -k$. As $\alpha(k)$ is an odd and $\beta(k)$ is an even function, so are their Cauchy transforms, implying that the symmetry condition in \eqref{mu} holds. 

We see that we have constructed a solution $\mu(k)$ of \eqref{mu} for any $\lambda(k) \in L^2_s(\Sigma^{I};\Ct)$. To prove the uniqueness statement, consider the following homogeneous R-H problem:

Find a vector-valued function $\nu(k)$, holomorphic for $k \in \C \setminus \Sigma^{I}$, satisfying
\begin{align}\nonumber
    \nu_+(k) &= \nu_-(k) v^{I}(k), \qquad \nu_\pm(k) \in L^2_s(\Sigma^{I};\Ct),
    \\\label{vanishing}
    \nu(-k) &= \nu(k) \sigma_1, \quad k \in \C \setminus \Sigma^{I}
    \\\nonumber
    \lim_{k\to\infty} \nu(k) &= 0.
\end{align}
Note that up to local conjugations around $k =\pm \I a$, $\pm \I c$, \eqref{vanishing} is the vanishing problem for the model problem from Theorem \ref{modUniq}. We want to show that $\nu(k) \equiv 0$. To this end, define 
\begin{align}\label{defZeta}
    \zeta(k) = 
    \begin{cases}
\nu(k), & k \in \C \setminus \mathbb{D}_\cup
\\
\nu(k)A(k), & k \in \Da
\\
\nu(k)\sigma_1 A(-k)\sigma_1, & k \in \Dma
\\
\nu(k)Q(k), & k \in \Dc
\\
\nu(k)\sigma_1 Q(-k)\sigma_1, & k \in \Dmc
\end{cases}
\end{align}
Then $\zeta(k)$ satisfies the model R-H problem in the $L^2$-sense and $\zeta(k) \to 0$ as $k\to \infty$. Now observe that the jump matrix $v^\text{mod}(k)$ is locally constant on the three intervals $(-\I c, -\I a)$, $(-\I a, \I a)$ and $(\I a, \I c)$, and hence can be locally analytically continued. Thus, by locally deforming the jump contour, while fixing the points $\pm \I c$, $\pm \I a$, we see that $\zeta(k)$ must in fact have an analytic continuation across the three intervals $(-\I c, -\I a)$, $(-\I a, \I a)$ and $(\I a, \I c)$ from either side. Moreover, as $\nu(k)$ is locally holomorphic around $\pm \I c$ and $\pm \I a$, it follows from \eqref{defZeta} that $\zeta(k)$ has at most fourth root singularities as $k \to \kappa\in \lbrace -\I a, \I a, -\I c, \I c \rbrace$. In other words, $\zeta(k)$ is a vanishing solution of the model R-H problem from Theorem \ref{modUniq}, and thus must be constant equal to $0$, by the uniqueness of $m^{\text{mod}}(k)$. It follows that $\nu(k) \equiv 0$, finishing the proof.
\end{proof}
From Theorem \ref{ThmInh} it follows that $\id - \mathcal{S}^{\Sigma^{I}}_{u^{I}}$ is bijective and by the bounded inverse theorem, it follows that $(\id - \mathcal{S}^{\Sigma^{I}}_{u^{I}})^{-1}$ is a bounded operator from $L^2_s(\Sigma^{I};\Ct)$ to itself. By Corollary \ref{LongCor} the same conclusion can be drawn for $\id - \mathcal{S}^{\Sigma^{II}}_{u^{I}}$ and its inverse.
\subsection{Comparison of resolvents}
We can now use the results from the previous sections, to prove that $m^{(2)}(k)$ and $m^\md(k)$ are asymptotically close to one another as $t \rightarrow \infty$.

 In fact, as $u^{I}(k)$ is periodic in time, we can conclude that the continuous family of operators $\id - \mathcal{S}^{\Sigma^{II}}_{u^I}$ is uniformly invertible
\beq\nn
\Vert (\id - \mathcal{S}^{\Sigma^{II}}_{u^I})^{-1} \Vert_{L^2_s(\Sigma^{II};\Ct)} \leq C. 
\eeq
Here, $C$ can be chosen locally uniformly in the parameter $\xi = \frac{x}{12t}$ (see remark below and Appendix A).
\begin{remark}\label{rmkxi}
It should be emphasized that $u^{I}(k)$, $u^{II}(k)$  do not only depend on $k$ and time, but also dependent on $\xi = \frac{x}{12t}$. As shown in Appendix A, we can choose the contour $\Sigma^{II}$ such that it does not depend on the parameter $\xi$, as long as $\xi$ stays in some compact subinterval of $(-\frac{c^2}{2}, \frac{c^2}{3})$. As the jump matrices are continuous functions of $\xi$, we can vary $\xi$ when letting $t \rightarrow \infty$ and all estimates would still hold. In our subsequent computations we suppress the $\xi$-dependence as it does not change the asymptotics as long $\xi$ stays in $(-c^2/2+\varepsilon, c^2/3- \varepsilon)$, for some $\varepsilon > 0$.
\end{remark} 
From now on we abbreviate the norms $\Vert . \Vert_{L^p_s(\Sigma^{II};\Ct)}$ and $\Vert . \Vert_{L^p(\Sigma^{II};\Ctt)}$ by $\Vert . \Vert_p$, for $p \in [1, \infty]$. We note that 
\beq\nn
\begin{gathered}
\Vert v^{I} - v^{II} \Vert_\infty = \Vert u^{I} - u^{II} \Vert_\infty = O(t^{-1}) 
\\
\Vert v^{I} - v^{II} \Vert_2 = \Vert u^{I} - u^{II} \Vert_2 = O(t^{-1})
\end{gathered}
\eeq
and
\beq\nn
\Vert \mathcal{S}^{\Sigma^{II}}_{u^{I}} - \mathcal{S}^{\Sigma^{II}}_{u^{II}} \Vert_2 = \Vert \mathcal{S}^{\Sigma^{II}}_{u^{I}-u^{II}} \Vert_2 = O(t^{-1}).
\eeq
As the set of bounded invertible operators is open in the operator norm topology, we conclude that for $t$ large enough the operator $\id-\mathcal{S}^{\Sigma^{II}}_{u^{II}}$ must also be uniformly invertible. Denote by $\phi^{II}(k)$ the unique solution of
\beq \label{SIE2}
(\id - \mathcal{S}^{\Sigma^{II}}_{u^{II}})\phi^{II}(k) = \mathcal{S}^{\Sigma^{II}}_-((1 \hspace{7pt} 1)u^{II}).
\eeq
The following computations shows that $\phi^{I}(k)$ and $\phi^{II}(k)$ are in fact also close to one another in $L^2_s(\Sigma^{II};\Ct)$:
\beq\nn
\Vert \phi^{I} - \phi^{II} \Vert_2
= \Vert (\id - \mathcal{S}^{\Sigma^{II}}_{u^I})^{-1} \mathcal{S}^{\Sigma^{II}}_-((1 \hspace{7pt} 1) u^I) - (\id - \mathcal{S}^{\Sigma^{II}}_{u^{II}})^{-1} \mathcal{S}^{\Sigma^{II}}_-((1 \hspace{7pt} 1) u^{II}) \Vert_2
\eeq
\beq
\nonumber
\Vert 
[(\id - \mathcal{S}^{\Sigma^{II}}_{u^{I}})^{-1}- (\id - \mathcal{S}^{\Sigma^{II}}_{u^{II}})^{-1}] \mathcal{S}^{\Sigma^{II}}_-((1 \hspace{7pt} 1)u^{I}) \Vert_2+
\Vert (\id - \mathcal{S}^{\Sigma^{II}}_{u^{II}})^{-1}\mathcal{S}^{\Sigma^{II}}_-((1 \hspace{7pt} 1) (u^{I}-u^{II})) \Vert_2 
\eeq	
\beq
\nonumber
\leq
C_1 \Vert (\id - \mathcal{S}^{\Sigma^{II}}_{u^{I}})^{-1}\mathcal{S}^{\Sigma^{II}}_{u^{I}-u^{II}}(\id - \mathcal{S}^{\Sigma^{II}}_{u^{II}})^{-1} \Vert_2+
C_2 \Vert u^{I} - u^{II} \Vert_2  = O(t^{-1})
\eeq
where we use the second resolvent formula. Furthermore, $\Vert \phi^{I,II} \Vert_2$ are uniformly bounded by the uniform invertibility of the corresponding singular integral operators, as well as the uniform boundedness of $\Vert u^{I,II} \Vert_2$.
Now, developing $1/(s-k)$ into a truncated Neumann series
\beq\nn
\dfrac{1}{s-k} = -\dfrac{1}{k} -\dfrac{s}{k^2}-\dfrac{s^2}{k^3}\dfrac{1}{1-s/k}
\eeq
and taking into account the exponential decay of the matrices $u^{I}(k), u^{II}(k)$ at infinity, one obtains for $k \rightarrow \infty$, such that $|1-s/k| \geq \varepsilon > 0$ for all $s \in \Sigma^{II}$,
\beq\nn
\begin{split}
m^\md(k) = (1 \hspace{7pt} 1) - \dfrac{1}{2\pi \I k} \int_{\Sigma^{II}} (\phi^{I}(s)+(1 \hspace{7pt} 1)) u^{I}(s) \ ds 
\\
- \dfrac{1}{2\pi \I k^2}
\int_{\Sigma^{II}} (\phi^{I}(s)+(1 \hspace{7pt} 1)) u^{I}(s)  s \ ds + O(k^{-3})
\end{split}
\eeq
and
\begin{align*}
\begin{split} 
m^{(2)}(k) = (1 \hspace{7pt} 1) - \dfrac{1}{2\pi \I k} \int_{\Sigma^{II}} (\phi^{II}(s)+(1 \hspace{7pt} 1)) u^{II}(s) \ ds 
\\
- \dfrac{1}{2\pi \I k^2}
\int_{\Sigma^{II}} (\phi^{II}(s)+(1 \hspace{7pt} 1)) u^{II}(s)  s \ ds + O(k^{-3}).
\end{split}
\end{align*}
Next we compute
\beq\nn
\begin{gathered}
\Big| \int_{\Sigma^{II}} (\phi^I(s)+(1 \hspace{7pt} 1)) u^I(s) \ ds - \int_{\Sigma^{II}} (\phi^{II}(s)+(1 \hspace{7pt} 1)) u^{II}(s) \ ds \Big|
\\
\leq \Big| \int_{\Sigma^{II}} (\phi^I(s)-\phi^{II}(s)) u^I(s) \ ds \Big| + \Big| \int_{\Sigma^{II}} \phi^{II}(s) (u^I(s)-u^{II}(s)) \ ds \Big|
\\
+ \ \Big| \int_{\Sigma^{II}} (1 \hspace{7pt} 1)(u^I(s) - u^{II}(s)) \ ds \Big| = O(t^{-1})
\end{gathered}
\eeq
and analogously
\beq\nn
\Big| \int_{\Sigma^{II}} (\phi^I(s)+(1 \hspace{7pt} 1)) u^I(s) s\ ds - \int_{\Sigma^{II}} (\phi^{II}(s)+(1 \hspace{7pt} 1)) u^{II}(s) s\ ds \Big| = O(t^{-1}).
\eeq
Hence, we can conclude that 
\beq\label{comparison}
m^{(2)}(k) = m^\md(k) + \dfrac{Er_1(t,\xi)}{k} + \dfrac{Er_2(t,\xi)}{k^2}+O(k^{-3})
\eeq
with
\begin{align}\nn
Er_1(t,\xi) &= \frac{1}{2\pi \I} \int_{\Sigma^{II}} (\phi^{I}(s)+(1 \hspace{7pt} 1)) u^{I}(s)-(\phi^{II}(s)+(1 \hspace{7pt} 1)) u^{II}(s) \ ds = O(t^{-1}),
\\\label{error terms}
Er_2(t,\xi) &= \frac{1}{2\pi \I} \int_{\Sigma^{II}} \big[(\phi^{I}(s)+(1 \hspace{7pt} 1)) u^{I}(s)-(\phi^{II}(s)+(1 \hspace{7pt} 1)) u^{II}(s)\big] s \ ds = O(t^{-1}),
\end{align}
where the $O(t^{-1})$ estimates for the error terms hold uniformly as $t \to \infty$ for $\xi \in (-\tfrac{c^2}{2}+\varepsilon, \tfrac{c^2}{3}-\varepsilon)$, $\varepsilon > 0$. Moreover, as $m^{(2)}(k)$ and $m^\md(k)$ satisfy condition \eqref{eq:symcond}, we obtain from \eqref{comparison}:
\beq\nn
\frac{Er_1(t,\xi)}{-k} = \frac{Er_1(t,\xi)\sigma_1}{k}, \qquad \frac{Er_2(t,\xi)}{(-k)^2} = \frac{Er_2(t,\xi)\sigma_1}{k^2},
\eeq
implying that $Er_1(t,\xi)$ and $Er_2(t,\xi)$ can be written as
\beq\nn
Er_1(t,\xi) = er_1(t,\xi)(1 \hspace{3pt} -1), \qquad Er_2(t,\xi) = er_2(t,\xi)(1 \hspace{7pt} 1),
\eeq
with scalar-valued $er_1(t,\xi)$, $er_2(t,\xi) = O(t^{-1})$, uniformly for $\xi \in (-\tfrac{c^2}{2}+\varepsilon, \tfrac{c^2}{3}-\varepsilon)$, $\varepsilon > 0$.

We will now make use of the formula \eqref{mproduct}
\beq \nonumber
m_1(k,x,t) m_2(k,x,t) = 1 + \dfrac{q(x,t)}{2k^2} + O(k^{-4}).
\eeq
There is no need to trace back our deformation and conjugation steps, as we are only interested in the asymptotics of $m(k)$ at infinity, and conjugation by $\E^{(t\Phi(k)/2 -\I t g(k))\sigma_3}F(k)^{\sigma_3}$ will not change the product $m^{(2)}_1(k) m^{(2)}_2(k)$. Hence we can conclude that
\begin{align}\nonumber
m_1(k) m_2(k) &= m^{(2)}_1(k) m^{(2)}_2(k) 
\\\label{final estimate}
&= m^\md_1(k) m^\md_2(k) + \frac{er_1(t,\xi)}{k}[m_2^{\text{mod}}(k)-m_1^{\text{mod}}(k)] 
\\\nonumber
&\quad - \frac{er_1(t,\xi)^2}{k^2} + \frac{er_2(t,\xi)}{k^2}[m_2^{\text{mod}}(k)+m_1^{\text{mod}}(k)]+O(k^{-4}).
\end{align}
Note that the remaining error term is of order $O(k^{-4})$, as both sides of the equation are even functions in $k$ due to \eqref{eq:symcond}. As $m^{(2)}(k) \to (1 \hspace{7pt} 1)$ for $k \to \infty$, we have that
\beq\nn
m_2^{\text{mod}}(k)-m_1^{\text{mod}}(k) = O(k^{-1}).
\eeq
We thus conclude from \eqref{error terms}, \eqref{final estimate} that
\begin{align} \label{mEstimate}
    m_1(k) m_2(k) = m^\md_1(k) m^\md_2(k) + \frac{1}{k^2}O(t^{-1}).
\end{align}
For the solution $q(x,t)$ of the KdV equation we obtain from \eqref{mEstimate}
\beq\nn
q(x,t) = q^{\text{\text{mod}}}(x,t) + O(t^{-1})
\eeq
where 
\beq\nn
m_1^{\text{\text{mod}}}(k)m_2^{\text{\text{mod}}}(k) = 1 + \dfrac{q^{\text{\text{mod}}}(x,t)}{2k^2} + O(k^{-4}).
\eeq
In \cite{EGT} it has been shown that $q^{\text{\text{mod}}}(x,t)$ has the form of a periodic Its--Matveev solution modulated by the parameter $\xi$. A general theorem summarizing the above argumentation in the abstract setting is given in Appendix B. 
\section{discussion}
%%%%%%%% Old Discussion
\begin{comment}
Recently we have shown in \cite{EPT} that an holomorphic invertible matrix model solution does not exist for values $x,t \in \R$ such that 
\beq\nn
(2n+1) \pi = t \hat B(\xi)
\eeq
with $\xi = \frac{x}{12t}$. As invertibility of the model matrix solution is an integral part of the small norm R-H approach, this causes problems when trying to compute uniform estimates for the difference between the KdV solution and the modulated elliptic wave.
In that work we have shown that the small norm R-H approach, does not give us a uniform estimate of the error term for arbitrary large $x$ and $t$. Furthermore, the special points $x,t$ such that $(2n+1) \pi = t \hat B(\xi)$ correspond to local minima of the modulated Its--Matveev solution $q^{IM}(x,t)$ given by
\beq\nn
q^{IM}(x,t)=-2\frac{d^2}{dx^2}\log \theta(\I V x - 4 \I W t -\hat A(p_0) -\mathcal K\,|\,\hat\tau) +Q.
\eeq
Hence, this observation raised the question whether the convergence rate to the modulated Its--Matveev solution might be in fact slower at the local minima. In this work we show that one can obtain uniform estimates, and hence the nonuniform estimates in \cite{EPT} can be attributed to the dependence of the small norm R-H approach on the existence of an invertible matrix model solution (\cite{PD}, \cite{GT}). 
\end{comment}
The main difference in our nonlinear steepest descent analysis compared to the usual one (see \cite{PD}, \cite{DZ}, \cite{DZNLS}), has been the avoidance of a small norm R-H problem. Instead, to obtain invertibility of the associated singular integral operators, we relied on the results from \cite[Sect.~2]{DZsob}. We also include a general theorem in Appendix B on obtaining the resolvent estimates found in Section 5.2 assuming the invertibility of the corresponding singular integral operators. Note however, that in our application we still needed invertible local versions of the model matrix solution around the points $\pm \I a$, $\pm \I c$.

An issue not covered here in detail, is the computation of a full asymptotic expansion of the R-H solution, as it is done in \cite{DKMVZ} for the case of orthogonal polynomials.  This works analogously in our case, as we have a shifted Neumann series given by:
\beq\nn
(\id - \mathcal{S}^{\Sigma^{II}}_{u^{II}})^{-1} = \sum_{n=0}^\infty \Big[ (\id - \mathcal{S}^{\Sigma^{II}}_{u^I})^{-1} ( \mathcal{S}^{\Sigma^{II}}_{u^{II}} - \mathcal{S}^{\Sigma^{II}}_{u^I}) \Big]^n (\id - \mathcal{S}^{\Sigma^{II}}_{u^I})^{-1}.
\eeq
This in itself is not enough to write down an expansion of the solution to the singular integral equation \eqref{SIE2} in powers of $t^{-1}$. We also need an expansion of 
\beq\nn
\mathcal{S}^{\Sigma^{II}}_{u^{II}-u^I} = \mathcal{S}^{\Sigma^{II}}_{u^{II}} - \mathcal{S}^{\Sigma^{II}}_{u^{I}}
\eeq
which is equivalent to an expansion of
\beq\nn
u^{II}(k) - u^{I}(k) = A^{-1}(k) - N^{-1}(k)
\eeq
on $\partial \Da$, as $u^{II}(k)-u^{I}(k) = o(t^{-r})$ for $r \in \N$ on the rest of the contour. To this end we make use of the full expansion of the Airy functions \cite[Ch.~9]{OLVER} in powers of $w^{-3/2}$, which translates to an expansion in $t^{-1}$. This results in 
\beq\nn
\phi^{II}(k,x,t) = \sum_{j=0}^r \dfrac{\phi^{II}_j(k,x,t)}{t^j} + Er(k,x,t)
\eeq
for $r \in \N$ and $\xi$ fixed, with $\phi^{II}_j(k,x,t)$ being a periodic function in $t$, $\phi^{II}_0 = \phi^{I}$ and $\Vert Er(\, \cdot \,,x,t) \Vert_2 = O(t^{-r-1})$. Consequently, one obtains a similar expansion of $m^{(2)}(k,x,t)$ in terms of $t^{-1}$ and $k^{-1}$,
\beq\nn
m^{(2)}(k,x,t) = m^\md(k,x,t) + \sum_{i=1}^r \sum_{j=1}^n \dfrac{c_{ij}(t,\xi)}{k^i t^j} +O(k^{-r-1}t^{-1} ) + O(k^{-1}t^{-n-1}).
\eeq
where $c_{ij}(t,\xi)$ are periodic in $t$ for fixed $\xi$. Hence, we see that the existence of an asymptotic expansion of the R-H solution follows  from an asymptotic expansion of the jump matrices, just as with the traditional small norm approach. It is interesting to observe, that while the solution to the model problem gives us the leading asymptotics, all other expansion terms are derived from the local parametrices around $\pm \I a$, i.e.~the upper edge of the first spectral band.

Another future challenge would be characterizing those R-H problems that do not admit an invertible holomorphic matrix-valued solution. Note that precisely for $t \hat B(\xi)=n\pi$, $n \in \Z$, the model R-H problem has an additional symmetry:
\beq\nn
v^\md(k) = \sigma_1 v^\md(k) \sigma_1
\eeq  
where $\sigma_1$ is the first Pauli matrix. In particular, one can check that for those values of $t$, $m^{\text{mod}}(k)\sigma_1$ is also a symmetric solution of the model problem. From  uniqueness it follows that $m^\text{mod}(k)\sigma_1 = m^\text{mod}(k)$, or equivalently $m^\text{mod}_1(k) = m^\text{mod}_2(k)$. This makes it easier to satisfy the equation $m^\text{mod}(k) = (0 \hspace{7pt} 0)$ for some $k$, which is related to the nonexistence of an invertible matrix-valued model solution (see \cite[Sect.~3]{EPT}). Indeed for odd $n$, $m^\text{mod}_\pm(0) = (0 \hspace{7pt} 0)$ holds and no holomorphic invertible matrix-valued solution exists. It would be interesting to explore the question whether such symmetric problems have a distinguished role in the R-H analysis of integrable equations. 

\appendix
\numberwithin{equation}{section}
\section{Uniformity of operator bounds}

Observe that the contour $\Sigma$ of the model R-H problem with the exponentially converging matrices depends on the parameter $\xi = \frac{x}{12t}$ via the point $a = a(\xi) \in (0, c)$. However, it is possible to make the contour $\Sigma^{II} = (\Sigma \setminus \mathbb{D}_\cup) \cup \partial \mathbb{D}_\cup$ at least locally in $\xi$, independent of $\xi$. To see this, note that while we for simplicity always assumed $\I a$ to be the center of $\Da$, this is not essential. Furthermore, we can always choose the rays emanating from $\I a$ to hit the boundary of the disc at the same points. This then allows us to choose the rest of the contour $\Sigma^{II}$ independent of $\xi$ as long as $\I a$ stays in the interior of $\Da$, which is now chosen to be, at least locally in $\xi$, independent of $\I a$.

\begin{figure}[H]
\begin{center}
\begin{tikzpicture}

\draw (0,0) circle (2.5 cm);

\draw (0,0) -- (1.75, 1.75);

\draw (0,0) -- (-1.75, 1.75);

\draw (0, -2.5) -- (0, 2.5);

\draw [dashed] (0, -2) -- (1.75, 1.75);

\draw [dashed] (0, -2) -- (-1.75, 1.75);
 
\node at (0.5,-0.1) {$\I a(\xi_0)$};

\node at (0.5,-2.1) {$\I a(\xi)$};

\node at (0,0) [circle,fill,inner sep=1pt]{};

\node at (0,-2) [circle,fill,inner sep=1pt]{};

\end{tikzpicture}
\end{center}
\caption{The disc $\Da = \mathbb D_{\I a(\xi_0)}$, with the inner rays depending on $\xi$.}
\end{figure}
This greatly simplifies the analysis, as we now have to deal with only one $\xi$-independent Hilbert space $L^2_s(\Sigma^{II};\Ct)$. While we might not be able to choose $\Da$ as large as possible because of the constraint that $k \rightarrow w$ should be bijective, we certainly can cover any compact interval contained in $(0, \I c)$ with finitely many discs. Hence all our estimates will be uniform, as long as $\xi$ stays in some compact subinterval of $(-c^2/2, c^2/3)$. 

Another issue neglected in the main text is the uniform boundedness of
\beq\nn
(\id - \mathcal{S}^{\Sigma^{II}}_{u^{I, II}})^{-1}.
\eeq
Note that for two operators $O$ and $P$, where $O$ is invertible and $P$ is some perturbation of $O$, we have the estimates
\beq \label{invB}
\Vert (O+P)^{-1} \Vert \leq \Vert O^{-1} \Vert \dfrac{1}{1- \Vert O^{-1} \Vert \Vert P \Vert}
\eeq
and similarly
\beq\nn
\Vert (O+P)^{-1} \Vert \geq \Vert O^{-1} \Vert \dfrac{1}{1+ \Vert O^{-1} \Vert \Vert P \Vert},
\eeq
whenever $\Vert O^{-1} \Vert \Vert P \Vert < 1$.
This implies continuity of the norm of the inverse. In particular, we can conclude that for $(\xi, t) \in K \times [T_1, T_2]$, where $K \subset (-c^2/2, c^2/3)$ is compact and $T_1 < T_2$, we have the estimate
\beq\nn
\Vert (\id - \mathcal{S}^{\Sigma^{II}}_{u^{I}})^{-1} \Vert_2 \leq C < \infty.
\eeq
By periodicity of $u^{I}(k)$ in time this estimate can be extended to $t \in \mathbb{R}$. Analogously, because of $\Vert u^{I} - u^{II} \Vert_\infty = O(t^{-1})$ we get 
\beq\nn
\Vert (\id - \mathcal{S}^{\Sigma^{II}}_{u^{II}})^{-1} \Vert_2 \leq C' < \infty.
\eeq
for $t$ large enough.

\section{A General theorem}
We now mention a theorem generalizing the argumentation given in the proof of the main result (c.f \cite[Cor.~7.108]{PD}, \cite[Ch.~3]{FIKN}, \cite[Prop.~4.4]{XZ}). Let $\Gamma$ be an oriented contour, such that the associated Cauchy operators $\mathcal{C}_\pm^\Gamma$ are bounded operators from $L^2(\Gamma)$ to itself. Necessary and sufficient conditions on the contour for the above statement to hold can be found in \cite{BK97} or \cite{JL0}.  Furthermore, let an $n \times n$ matrix-valued function $v \in \id + L^2(\Gamma;\Cnn)$ be given, such that $v^{-1} \in  \id + L^2(\Gamma;\Cnn)$. We associate to $v$ a factorization data $u=(u^+, u^-) \in L^2(\Gamma;\Cnn) \cap L^\infty(\Gamma;\Cnn)$, such that $v = (\id - u^-)^{-1} (\id + u^+)$ on the contour $\Gamma$. Note that the factorization data is nonunique, but always exists, as one can choose $u^{-} = 0$ and $u^{+} = v-\id$, as it is done in the main text. For any factorization data we define a singular integral operator
\beq \nonumber
\mathcal{C}^\Gamma_{u}: L^2(\Gamma;\Cn) \rightarrow L^2(\Gamma;\Cn), \hspace{7pt} \phi \mapsto \mathcal{C}^\Gamma_+(\phi u^-) + \mathcal{C}^\Gamma_-(\phi u^+).
\eeq
Again, we are interested in solutions of the R-H problem on the contour $\Gamma$ with jump matrix $v$. The normalization for the vector-valued solution $m$ is assumed to take the simple form
\beq\nn
\lim\limits_{k \rightarrow \infty} m(k)= m_\infty \in \C^n
\eeq
where the limit is taken such that $|1-s/k| \geq \varepsilon > 0$ for all $s \in \Gamma$ and some positive constant $\varepsilon$. As before the above R-H problem is equivalent to the following singular integral equation
\beq \label{singInt}
\big(\id - \mathcal{C}^\Gamma_{u} \big) \phi = \mathcal{C}^\Gamma_u(m_\infty)
\eeq
where $m$ can be obtained by the formula
\begin{align}\label{defOfm}
\begin{split}
m(k)&= m_\infty + \dfrac{1}{2\pi \I} \int_\Gamma (\phi(s)+m_\infty)(u^+(s) + u^-(s))\dfrac{ds}{s-k}
\\
&= m_\infty + \mathcal{C}^\Gamma((\phi + m_\infty)(u^+ + u^-))(k).
\end{split}
\end{align}
Indeed, assume $\phi$ satisfies \eqref{singInt} and define $m$ as above. Then
\begin{align*}
\begin{split}
m_+ &= m_\infty + \mathcal{C}^\Gamma_+((\phi + m_\infty)(u^+ + u^-))
\\
&= m_\infty + \mathcal{C}^\Gamma_-(\phi u^+) +\phi u^+ + \mathcal{C}^\Gamma_+(\phi u^-) + \mathcal{C}^\Gamma_-(m_\infty u^+) + m_\infty u^+ + \mathcal{C}^\Gamma_+(m_\infty u^-)
\\
&= m_\infty(\id + u^+) + \underbrace{\mathcal{C}^\Gamma_u(\phi) + \mathcal{C}^\Gamma_u(m_\infty)}_\phi + \phi u^+
\\
&= (m_\infty+\phi)(\id + u^+)
\end{split}
\end{align*}
where we used $\mathcal{C}^\Gamma_+ - \mathcal{C}^\Gamma_- = \id$. Analogously one computes
\beq\nn
m_- = (m_\infty + \phi)(\id- u^-),
\eeq
which then implies
\beq\nn
m_+ = m_-(\id-u^-)^{-1} (\id + u^+) = m_- v.
\eeq
Hence, $m$ is a solution of the R-H problem with $\lim_{k \rightarrow \infty} m(k) = m_\infty$. Conversely, assume $m$ is a solution to the R-H problem. Then by the Sokhotski--Plemelj formula for additive R-H problems, $m$ can be written as
\beq \nonumber
m = m_\infty + \mathcal{C}^\Gamma(m_+(\id - v^{-1}))=m_\infty + \mathcal{C}^\Gamma(m_-(v-\id)). 
\eeq
Define 
\beq\nn
\phi = m_-(\id -u^-)^{-1} - m_\infty = m_+(\id +u^+)^{-1} - m_\infty.
\eeq
Then relation \eqref{defOfm} is fulfilled.
From the definition of $\phi$ it follows that
\begin{align*}
\begin{split}
m_+ &= (\phi + m_\infty)(\id +u^+),
\\
m_- &= (\phi + m_\infty)(\id -u^-).
\end{split}
\end{align*}
Meanwhile, \eqref{defOfm} together with $\mathcal{C}^\Gamma_+ - \mathcal{C}^\Gamma_- = \id$, imply as before 
\begin{align*} 
\begin{split}
m_+ &= m_\infty(\id + u^+) + \mathcal{C}^\Gamma_u(\phi) + \mathcal{C}^\Gamma_u(m_\infty)+\phi u^+,
\\
m_- &= m_\infty(\id - u^-) + \mathcal{C}^\Gamma_u(\phi) + \mathcal{C}^\Gamma_u(m_\infty) -\phi u^-.
\end{split}
\end{align*}
Comparing the two expressions of either $m_+$ or $m_-$ results in \eqref{singInt}. We are now in a position to state a theorem generalising the arguments given in Section 5.
\begin{theorem}
Let $\Gamma$ be a contour such that the Cauchy operators $\mathcal{C}^\Gamma_\pm$ are bounded operators from $L^2(\Gamma)$ to itself with operator bounds less than $C>0$, and let for $i = 1,2$
\beq\nn
v_i : \R_+ \rightarrow \id + L^2(\Gamma;\Cnn), \hspace{7pt} t \mapsto v_i(t) = v_i(t,k)
\eeq
together with a factorization
\beq\nn
v_i(t) = (\id - u_i^-(t))^{-1}(\id + u_i^+(t)), \hspace{7pt} t > 0
\eeq
be given, such that $v_i^{-1}(t) \in \id + L^2(\Gamma;\Cnn)$ and $u_i^{\pm}(t) \in L^2(\Gamma;\Cnn) \cap L^\infty(\Gamma;\Cnn)$. Furthermore, assume that the operator $\id - \mathcal{C}^\Gamma_{u_1}$ is invertible for all $t > 0$ with
\beq\nn
\Vert (\id - \mathcal{C}^\Gamma_{u_1})^{-1} \Vert_2 \leq \rho(t)
\eeq
and
\beq\nn
\Vert u_1^\pm - u_2^\pm \Vert_{2} \leq \epsilon(t), \hspace{10pt} \Vert u_1^\pm - u_2^\pm \Vert_{\infty} \leq \delta(t),
\eeq 
where $\rho(t), \epsilon(t)$ and $\delta(t)$ are given positive functions and $\Vert . \Vert_p := \Vert  . \Vert_{L^p(\Gamma)}$, $p \in [1,\infty]$, where the norm is naturally generalized to matrix and vector functions. Then $\id - \mathcal{C}^\Gamma_{u_2}$ is also invertible as long as $C \rho(t) \delta(t) < 1$ with
\beq\nn
\Vert (\id - \mathcal{C}^\Gamma_{u_2})^{-1} \Vert_2 \leq \dfrac{\rho(t)}{1-C\rho(t)\delta(t)}.
\eeq 
For $t >0$ such that $C \rho(t) \delta(t) < 1$, denote by $\phi_{1,2}$ the unique solutions of 
\beq\nn
(\id - \mathcal{C}^\Gamma_{u_{1,2}}) \phi_{1,2} = \mathcal{C}^\Gamma_{u_{1,2}}(m_\infty)
\eeq
for some fixed $m_\infty \in \C^n$. Then
\beq\nn
\Vert \phi_1 - \phi_2 \Vert_{2} \leq \dfrac{2C\rho(t) \epsilon(t)}{1-C\rho(t)\delta(t)} \Vert m_\infty \Vert_{\infty} 
+ \dfrac{2C^2 \rho^2(t) \delta(t)}{1-C\rho(t)\delta(t)} \Vert m_\infty \Vert_{\infty} \Vert u_1 \Vert_{2}.
\eeq
Now, assume that the $i$-th moments of $u^\pm_1$ and $u^\pm_2$ exist, in the sense that
\beq\nn
\Vert u^{\pm}_j(k) k^i \Vert_{p} \leq \infty
\eeq
for $j = 1,2$, $p=1,2$ and $i = 0, \dots, r$. Then for the vector solutions $m_1$ and $m_2$ associated to $\phi_1$ and $\phi_2$ via \eqref{defOfm}, we have the expansion
\beq\nn
m_j(k) = m_\infty - \sum_{i = 1}^{r} \dfrac{1}{k^i} \int_\Gamma (\phi_j(s)+m_\infty) (u_j^+(s)+u_j^-(s)) s^{i-1}ds + O(k^{-r-1})
\eeq
for $j=1,2$ where $k \rightarrow \infty$ such that $|1-s/k| \geq c>0$ for $s \in \Gamma$.
Furthermore, if
\beq\nn
\Vert u^{\pm}_1(k) k^i - u^{\pm}_2(k) k^i  \Vert_{2} \leq \rho_i(t) 
\eeq
\beq\nn
\Vert u^{\pm}_1(k) k^i - u^{\pm}_2(k) k^i  \Vert_{1} \leq \sigma_i(t)
\eeq
for $i = 0, \dots, r-1$, then
\beq\nn
m_1(k) - m_2(k) = \sum_{i = 1}^{r} \dfrac{c_i}{k^i} + O(k^{-r-1})
\eeq
with
\begin{align*}
|c_i| \leq   \Vert \phi_1 - \phi_2 \Vert_{2} \Vert (u_1^+(k) + u_1^-(k))k^{i-1} \Vert_{2} +  2 \Vert \phi_2 \Vert_{2} \rho_{i-1}(t) + 2\Vert m_\infty \Vert_\infty \sigma_{i-1}(t).
\end{align*}
Moreover,
\beq
\nonumber
|m_1(k) - m_2(k)| \leq \textup{dist}(k,\Gamma)^{-1}\Big[\Vert \phi_1 - \phi_2 \Vert_{2} \Vert (u_1^+(k) + u_1^-(k))\Vert_{2} +  2 \Vert \phi_2 \Vert_{2} \rho_0(t)
\eeq
\beq\nn
+ 2\Vert m_\infty \Vert_\infty \sigma_0(t)\Big].
\eeq
\end{theorem}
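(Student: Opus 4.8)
The plan is to run a sequence of routine perturbation estimates, all built on two facts already in hand: the map $u \mapsto \frak{C}^\Gamma_u$ is linear, so $\frak{C}^\Gamma_{u_1} - \frak{C}^\Gamma_{u_2} = \frak{C}^\Gamma_{u_1 - u_2}$ with $\Vert \frak{C}^\Gamma_{u_1 - u_2}\Vert_2 \le C\delta(t)$; and the R-H solution is recovered from $\phi$ through \eqref{defOfm}, whose equivalence with \eqref{singInt} was just verified. First I would establish invertibility of $\id - \frak{C}^\Gamma_{u_2}$ by writing $\id - \frak{C}^\Gamma_{u_2} = (\id - \frak{C}^\Gamma_{u_1}) + \frak{C}^\Gamma_{u_1-u_2}$ and invoking the perturbation inequality \eqref{invB} with $A = \id - \frak{C}^\Gamma_{u_1}$, $\Vert A^{-1}\Vert_2 \le \rho(t)$, $\Vert \frak{C}^\Gamma_{u_1-u_2}\Vert_2 \le C\delta(t)$; this yields both the invertibility and the bound $\Vert (\id - \frak{C}^\Gamma_{u_2})^{-1}\Vert_2 \le \rho(t)/(1 - C\rho(t)\delta(t))$ as soon as $C\rho(t)\delta(t) < 1$, so that $\phi_{1,2} = (\id - \frak{C}^\Gamma_{u_{1,2}})^{-1}\frak{C}^\Gamma_{u_{1,2}}(m_\infty)$ are well defined.

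Next, for $\phi_1 - \phi_2$ I would insert the intermediate term $(\id - \frak{C}^\Gamma_{u_2})^{-1}\frak{C}^\Gamma_{u_1}(m_\infty)$ and split
\[
\phi_1 - \phi_2 = \big[(\id - \frak{C}^\Gamma_{u_1})^{-1} - (\id - \frak{C}^\Gamma_{u_2})^{-1}\big]\frak{C}^\Gamma_{u_1}(m_\infty) + (\id - \frak{C}^\Gamma_{u_2})^{-1}\frak{C}^\Gamma_{u_1-u_2}(m_\infty).
\]
The second term is bounded directly from $\Vert \frak{C}^\Gamma_{u_1-u_2}(m_\infty)\Vert_2 \le C\Vert m_\infty\Vert_\infty(\Vert u_1^+-u_2^+\Vert_2 + \Vert u_1^--u_2^-\Vert_2)$ and $\Vert u_1^\pm - u_2^\pm\Vert_2 \le \epsilon(t)$, the first from the second resolvent identity $(\id-\frak{C}^\Gamma_{u_1})^{-1} - (\id-\frak{C}^\Gamma_{u_2})^{-1} = (\id-\frak{C}^\Gamma_{u_1})^{-1}\frak{C}^\Gamma_{u_1-u_2}(\id-\frak{C}^\Gamma_{u_2})^{-1}$ together with $\Vert \frak{C}^\Gamma_{u_1}(m_\infty)\Vert_2 \le C\Vert m_\infty\Vert_\infty\Vert u_1\Vert_2$; adding the two bounds produces the stated estimate for $\Vert \phi_1 - \phi_2\Vert_2$.

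For the asymptotic expansion of $m_j$ I would substitute into \eqref{defOfm} the exact identity $\tfrac{1}{s-k} = -\sum_{i=1}^l \tfrac{s^{i-1}}{k^i} + \tfrac{1}{k^l}\tfrac{s^l}{s-k}$ (valid for $s \ne k$): the principal part gives the claimed finite sum, while in the tail $\tfrac{1}{k^l}\int_\Gamma (\phi_j + m_\infty)(u_j^+ + u_j^-)\tfrac{s^l}{s-k}\,ds$ the hypothesis $|1-s/k|\ge c$ gives $|s-k|\ge c|k|$, and Cauchy--Schwarz together with Hölder bound $\int_\Gamma |(\phi_j + m_\infty)(u_j^+ + u_j^-)|\,|s|^l\,|ds|$ by $\Vert \phi_j\Vert_2(\Vert u_j^+ s^l\Vert_2 + \Vert u_j^- s^l\Vert_2) + \Vert m_\infty\Vert_\infty(\Vert u_j^+ s^l\Vert_1 + \Vert u_j^- s^l\Vert_1)$, which is finite by the moment hypotheses; hence the tail is $O(k^{-l-1})$. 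Subtracting the expansions for $j = 1,2$, the coefficient of $k^{-i}$ equals (up to $\tfrac{1}{2\pi\I}$) $-\int_\Gamma [(\phi_1+m_\infty)(u_1^++u_1^-) - (\phi_2+m_\infty)(u_2^++u_2^-)]\,s^{i-1}\,ds$; writing the integrand as $(\phi_1-\phi_2)(u_1^++u_1^-) + \phi_2\big((u_1^+-u_2^+)+(u_1^--u_2^-)\big) + m_\infty\big((u_1^+-u_2^+)+(u_1^--u_2^-)\big)$ and estimating the three pieces by Cauchy--Schwarz (first two, using $\rho_{i-1}$ for the second) and Hölder (third, using $\sigma_{i-1}$) delivers the bound on $|c_i|$. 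The final pointwise estimate comes from the same three-term splitting applied to $m_1(k) - m_2(k) = \tfrac{1}{2\pi\I}\int_\Gamma[(\phi_1+m_\infty)(u_1^++u_1^-) - (\phi_2+m_\infty)(u_2^++u_2^-)]\tfrac{ds}{s-k}$ combined with $|s-k|^{-1} \le \textup{dist}(k,\Gamma)^{-1}$.

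I do not expect a conceptual obstacle: the proof is an assembly of the second resolvent identity, the operator bound $\Vert \frak{C}^\Gamma_u\Vert_2 \le C\Vert u\Vert_\infty$, and Cauchy--Schwarz/Hölder. The one genuinely delicate point will be the bookkeeping of constants so as to land on precisely the stated inequalities --- in particular the factors of two coming from the two pieces $u^\pm$ of the factorization data must be absorbed into $C$ (or into $\Vert u_1\Vert_2$, $\epsilon$, $\delta$) --- together with checking at the outset that each integral occurring in a remainder term converges, which is exactly the role of the moment conditions $\Vert u_j^\pm k^i\Vert_1, \Vert u_j^\pm k^i\Vert_2 < \infty$.
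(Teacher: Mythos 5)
Your proposal is correct and follows essentially the same route as the paper: the paper's own proof simply invokes the perturbation inequality \eqref{invB} for the invertibility claim and then refers back to the Section 5 computations (the second resolvent identity applied after inserting the intermediate term, the truncated Neumann/geometric expansion of $(s-k)^{-1}$, and the three-term splitting of the integrand for the coefficient bounds), which is precisely what you reconstruct. The only caveat, which you already flag, is the bookkeeping of the factors of $2$ arising from the two pieces $u^{\pm}$ of the factorization data.
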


\begin{proof}
The statement concerning the existence and bound of $(\id - \mathcal{C}_{u_2}^\Gamma)^{-1}$ follows directly from formula \eqref{invB} in Appendix A. The estimates for $\Vert \phi_1 - \phi_2 \Vert_{2}$ and $c_i$ can be computed as it is done in the proof of our main result concerning the KdV equation, where we identify $u_1$ with $u^I$ and $u_2$ with $u^{II}$. The last estimate is obtained similarly by bounding $(k-s)^{-1}$ by $\mbox{dist}(k,\Gamma)^{-1}$ instead of writing down the Neumann series. 
\\
\end{proof}
\begin{remark}
With the identification of $u_1$ with $u^{II}$ and $u_2$ with $u^{I}$ one obtains analogously the estimates
\beq\nn
\Vert \phi_1 - \phi_2 \Vert_{2} \leq 2C\rho(t) \epsilon(t) \Vert m_\infty \Vert_{\infty} 
+ \dfrac{2C^2 \rho^2(t) \delta(t)}{1-C\rho(t)\delta(t)} \Vert m_\infty \Vert_{\infty} \Vert u_2 \Vert_{2},
\eeq
\vspace{2pt}
\beq\nn
|c_i| \leq   \Vert \phi_1 - \phi_2 \Vert_{2} \Vert (u_2^+(k) + u_2^-(k))k^{i-1} \Vert_{2} +  2 \Vert \phi_1 \Vert_{2} \rho_{i-1}(t)
+ 2\Vert m_\infty \Vert_\infty \sigma_{i-1}(t),
\eeq
\vspace{2pt}
\beq \nonumber
|m_1(k) - m_2(k)| \leq \textup{dist}(k,\Gamma)^{-1}\Big[\Vert \phi_1 - \phi_2 \Vert_{2} \Vert (u_2^+(k) + u_2^-(k))\Vert_{2} +  2 \Vert \phi_1 \Vert_{2} \rho_0(t)
\eeq
\beq\nn
+ 2\Vert m_\infty \Vert_\infty \sigma_0(t)\Big].
\eeq
\end{remark}

\noindent{\bf Acknowledgments.} The author thanks Iryna Egorova and Gerald Teschl for fruitful discussions and helpful remarks which are scattered throughout this work.

\end{document}